\theoremstyle{plain}
\newtheorem{theorem}{Theorem}
\newtheorem{corollary}[theorem]{Corollary}
\newtheorem{lemma}[theorem]{Lemma}
\newtheorem{definition}[theorem]{Definition}
\numberwithin{equation}{section}
\numberwithin{theorem}{section}
\newcommand{\norm}{\mathbin{\lhd}}
\newcommand{\cov}{%
    \mathbin{\mbox{%
               \makebox[.8ex][l]{%
                  \raisebox{.725ex}{%
                     \rule{1ex}{.12ex}}}%
               \Large $\Yleft$}}}
\renewcommand{\cov}{\Yleft}
\newcommand{\meet}{\wedge}
\newcommand{\join}{\vee}
\let\hat=\widehat
\newcommand{\N}{\ensuremath{\mathbb{N}}}
\newcommand{\C}{\mathbb{C}}
\newcommand{\supstyle}[1]{{\sf #1}}
\newcommand{\AAA}{\supstyle{A}}
\newcommand{\CCC}{\supstyle{C}}
\newcommand{\DDD}{\supstyle{D}}
\newcommand{\EEE}{\supstyle{E}}
\newcommand{\aaa}{\supstyle{a}}
\newcommand{\bbb}{\supstyle{b}}
\newcommand{\ccc}{\supstyle{c}}
\newcommand{\ddd}{\supstyle{d}}
\newcommand{\eee}{\supstyle{e}}
\newcommand{\fff}{\supstyle{f}}
\newcommand{\rrr}{\supstyle{r}}
\newcommand{\sss}{\supstyle{s}}
\newcommand{\uuu}{\supstyle{u}}
\newcommand{\vvv}{\supstyle{v}}
\newcommand{\www}{\supstyle{w}}
\newcommand{\xxx}{\supstyle{x}}
\newcommand{\yyy}{\supstyle{y}}
\newcommand{\zzz}{\supstyle{z}}
\newcommand{\KK}{\mathcal{K}}
\newcommand{\LL}{\mathcal{L}}
\newcommand{\xx}{\mathcal{X}} \let\XX=\xx
 \let\YY=\yy
\newcommand{\A}[1]{\AAA_{#1}}
\newcommand{\defnstyle}[1]{{\bfseries\em #1}}
\newcommand{\gen}[1]{\left\langle #1 \right\rangle}
\newcommand{\Max}[1]{\ensuremath{\supstyle{M}_{#1}}}
\newcommand{\Min}[2][]{\supstyle{m}_{#2}^{#1}}
\newcommand{\xinv}[1]{\supstyle{I}_{#1}}
\newcommand{\by}{\times}
\newcommand{\idp}{\mathbin{\dot{\times}}}
\renewcommand{\phi}{\varphi}
\DeclareMathOperator{\Part}{Part}
\DeclareMathOperator{\Sup}{Sup}
\DeclareMathOperator{\Aut}{Aut}
\DeclareMathOperator{\Irr}{Irr}
\newcommand{\sseq}{\subseteq}
\newcommand{\pleq}{\leq}
\def\marginpar#1{}
\begin{document}

\title[upper and lower semimodularity conditions]
      {Upper and lower semimodularity of the supercharacter theory lattices of cyclic groups}
\date{\today}

\author[Benidt]{Samuel~G. Benidt}
\address[Samuel~G. Benidt]{Concordia College, Department of Mathematics and Computer Science, Moorhead, MN 56562 USA}
\email{sgbenidt@gmail.com}
\author[Hall]{William~R.S. Hall}
\address[William~R.S. Hall]{Stony Brook University, Department of Applied Mathematics and Statistics, Stony Brook, NY 11794 USA}
\email{whall@ams.sunysb.edu}
\author[Hendrickson]{Anders~O.F. Hendrickson}
\address[Anders~O.F. Hendrickson]{Concordia College, Department of Mathematics and Computer Science, Moorhead, MN 56562 USA}
\email{ahendric@cord.edu}
\keywords{lattice, supercharacter theory, semimodularity}
%\subjclass

\begin{abstract}
We consider the lattice of supercharacter theories, in the sense of Diaconis and Isaacs \cite{diaconis_isaacs},
of the cyclic group of order $n$.
We find necessary and sufficient conditions on $n$ for that lattice to be upper or lower semimodular.
\end{abstract}

\maketitle

\thispagestyle{empty}

\def\Cyc#1{C_{#1}}

%%%%%%%%%%%%%%%%
% INTRODUCTION %
%%%%%%%%%%%%%%%%

Diaconis and Isaacs defined supercharacter theories 
as generalizations of ordinary character theory
which use certain (generally reducible) characters in place of irreducible characters,
and a coarser partition of the group in place of the partition into conjugacy classes \cite{diaconis_isaacs}.
Much attention has been paid to a certain supercharacter theory of algebra groups 
which is useful in random walk problems \cite{arias2004},
and Aguiar et al.~have discovered striking connections with the Hopf algebra of symmetric functions of noncommuting variables \cite{aimpaper}.
Supercharacter theories of cyclic groups (indeed, of all abelian groups) 
are in bijective correspondence with the groups' Schur rings \cite{ssr},
and in that language they have long been of great interest to
algebraic combinatorists studying circulant graphs \cite{muzychuk_ponomarenko}.
More recently, C. Fowler, S. Garcia, and G. Karaali
have shown that several Ramanujan sum identities
can be derived easily using the machinery of supercharacter theories of cyclic groups \cite{fowler_garcia_karaali}.
For these reasons, the set of supercharacter theories of the cyclic group $\Cyc{n}$ is worth studying.

The set of all supercharacter theories of a given group forms a partially ordered set, and indeed a lattice.
In this paper, we find necessary and sufficient conditions on $n$
for the supercharacter theory lattice of $\Cyc{n}$
to be upper or lower semimodular.

Section \ref{sect_background}
reviews the necessary definitions and notation for supercharacter theories and for lattices.
Sections \ref{sect_starprod} and \ref{sect_directproducts}
will investigate lattice-theoretic properties
of $*$-products and of
direct product theories of $\Cyc{pq}$, respectively.
Using the results of those sections,
we prove in section \ref{sect_upper} that $\Sup(\Cyc{n})$ is upper semimodular if and only if $n$ is prime or four;
then in the final section %\ref{sect_lower} 
we prove that $\Sup(\Cyc{n})$ is lower semimodular
precisely when $n$ is prime, four, or of the form $pq$.

\section{Background}\label{sect_background}

All groups in this paper are finite.
Diaconis and Isaacs defined supercharacter theories as follows:
\begin{definition}[{\cite[section 2]{diaconis_isaacs}}]
Let $G$ be a finite group, let $\KK$ be a partition of $G$, and let $\XX$ be a
partition of the set of irreducible characters $\Irr(G)$. 
Suppose that for every part $X \in \XX$ there exists a character
$\chi_X$ whose irreducible constituents lie in $X$, such that the following three conditions hold:
\begin{enumerate}
  \item $|\XX| = |\KK|$.
  \item Each of the characters $\chi_X$ is constant on every part of $\KK$.
  \item $\{1\}\in\KK$. %Every irreducible character is a constituent of some $\chi_X$.
\end{enumerate}
Then we say the ordered pair $(\xx,\KK)$ is a \defnstyle{supercharacter theory} of $G$,
and we write $\Sup(G)$ for the set of all supercharacter theories.
The parts of $\KK$ are called the \defnstyle{superclasses}, and the characters $\chi_X$ are called the \defnstyle{supercharacters}.
\end{definition}

If $g\in G$ and $\ccc\in\Sup(G)$,
the superclass of $g$ in $\ccc$ will be denoted $[g]_\ccc$,
or simply $[g]$ if the supercharacter theory is unambiguous.
For every subset $K\sseq G$, let $\hat{K}$ denote the sum $\sum_{g\in K} g$ in the group algebra $\C[G]$.
The following fact is exceedingly useful:

\begin{lemma}[{\cite[Corollary 2.3]{diaconis_isaacs}}]\label{superclassproducts}
  Suppose that $K$ and $L$ are superclasses in some supercharacter theory for a group $G$.
  Then $\hat{K}\hat{L}$ is a nonnegative integer linear combination of superclass sums in the group algebra $\C[G]$.
\end{lemma}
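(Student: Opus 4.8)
The plan is to realize the span $S := \mathrm{span}_{\C}\{\hat K : K \text{ a superclass}\}$ as a subalgebra of the center $Z(\C[G])$ of the group algebra, and then to read off the integrality directly from the group law. The nonnegative-integrality comes essentially for free: once I know $\hat K\hat L \in S$, I can write $\hat K\hat L = \sum_M \gamma_M \hat M$ and compare the coefficient of a fixed $m \in M$ on both sides to get $\gamma_M = |\{(k,l)\in K\times L : kl = m\}|$, which is manifestly a nonnegative integer. So the entire content of the lemma is the \emph{closure} statement $\hat K\hat L \in S$, equivalently that the coefficient of $m$ in $\hat K\hat L$ depends only on the superclass of $m$.

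First I would record that every superclass is a union of conjugacy classes. The supercharacters $\chi_X$ are class functions constant on superclasses, they are linearly independent (their irreducible constituents lie in the pairwise disjoint parts $X$), and there are $|\XX| = |\KK|$ of them. Since the space $V$ of functions constant on the $|\KK|$ superclasses has dimension $|\KK|$, the $\chi_X$ form a basis of $V$; hence every superclass function is a $\C$-combination of the $\chi_X$ and so is itself a class function. Applying this to the indicator of a superclass $K$ shows $K$ is a union of conjugacy classes, so $\hat K \in Z(\C[G])$ and in particular $\hat K\hat L$ is central. This already gives that the coefficient of $m$ in $\hat K\hat L$ is constant on conjugacy classes; the task is to upgrade ``conjugacy class'' to ``superclass.''

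The key step is to identify $S$ with the span of the block idempotents $f_X := \sum_{\psi \in X} e_\psi$, where $e_\psi$ is the primitive central idempotent attached to $\psi \in \Irr(G)$. These are orthogonal idempotents, $f_X f_Y = \delta_{X,Y} f_X$, so their span is automatically a subalgebra; thus it suffices to prove each $f_X$ lies in $S$ and that the $f_X$ form a basis of $S$. Computing coefficients via the standard formula for $e_\psi$, the coefficient of $g$ in $f_X$ is $\tfrac{1}{|G|}\,\overline{\sigma_X(g)}$ with $\sigma_X := \sum_{\psi\in X}\psi(1)\,\psi$, so $f_X \in S$ exactly when $\sigma_X$ is constant on superclasses. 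To see that it is, I would use the regular character $\rho = \sum_{\psi\in\Irr(G)}\psi(1)\,\psi$, which vanishes off the identity and hence, since $\{1\}$ is a superclass, lies in $V$. Writing $\rho = \sum_X c_X \chi_X$ and projecting onto the block $X$ (using that the constituents of $\chi_X$ lie in $X$) gives $\sigma_X = c_X \chi_X$, so $\sigma_X$ is a scalar multiple of a supercharacter and therefore constant on superclasses. Thus each $f_X \in S$; as the $|\XX| = |\KK| = \dim S$ idempotents $f_X$ are linearly independent, they form a basis of $S$, which is consequently closed under multiplication. Hence $\hat K\hat L \in S$, and the coefficient computation of the first paragraph finishes the proof.

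The main obstacle is precisely this identification $S = \mathrm{span}\{f_X\}$: a priori $\hat K\hat L$ is only guaranteed to be a nonnegative integer combination of \emph{conjugacy}-class sums, and nothing in the definition forces those coefficients to agree across a superclass. The regular-character trick is what breaks the impasse, pinning each supercharacter down as a scalar multiple of $\sigma_X$ and thereby matching the superclass side of the theory with the block decomposition of the center.
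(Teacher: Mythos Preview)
Your proof is correct. The paper does not supply its own argument for this lemma; it simply quotes the result from Diaconis and Isaacs, so there is nothing to compare your approach against within the paper itself.

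For what it is worth, your argument is essentially the standard one: show that the span $S$ of superclass sums coincides with the span of the block idempotents $f_X = \sum_{\psi\in X} e_\psi$, using the regular-character trick to prove $\sigma_X = \sum_{\psi\in X}\psi(1)\psi$ is a scalar multiple of the supercharacter $\chi_X$; closure under multiplication then follows from orthogonality of the $f_X$, and the nonnegative-integer coefficients drop out of the counting identity $\gamma_M = |\{(k,l)\in K\times L: kl = m\}|$. The one step worth being slightly careful about is your assertion that the $\chi_X$ are linearly independent: you rely on the fact that the irreducible constituents of $\chi_X$ lie in $X$ and the parts $X$ are pairwise disjoint, which is exactly right and makes the projection argument $\sigma_X = c_X\chi_X$ go through cleanly.
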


%%%In fact, if $(\XX,\KK)\in\Sup(G)$, then each of the partitions $\XX$ and $\KK$ determines the other \cite[??]{diaconis_isaacs}.
%%%Moreover, we can determine whether a partition $\KK$ contains the superclasses of a supercharacter theory
%%%merely by computations in the group algebra, without resorting to characters.
%%%\begin{lemma}[{\cite[ref?]{diaconis_isaacs}}]
%%%Let $G$ be a finite group and let $\KK$ be a partition of $G$ 
%%%Then there exists a partition $\XX$ of $\Irr(G)$ such that $(\XX,\KK)\in\Sup(G)$ if and only if
%%%\begin{enumerate}
%%%  \item $\{1\}\in\KK$,
%%%  \item each part of $\KK$ is a union of conjugacy classes, and
%%%  \item The subspace $\operatorname{span}\{\hat{K}: K\in\KK\}$
%%%        of $\C[G]$ is closed under multiplication.
%%%\end{enumerate}
%%%\end{lemma}
%%%In other words, the third condition guarantees that for all superclasses $K$ and $L$, 
%%%the product $\hat{K}\hat{L}$ is a linear combination of superclass sums.

%%  For ease of notation, 
%%  we will often list a partition by using
%%  slashes to denote where parts begin and end, 
%%  while commas set off the individual elements in each part.
%%  For example, we will write the partition
%%  $\{\{a_1\}, \{a_2, a_3\}, \{a_4\}\}$
%%  more concisely as 
%%  $\{a_1/ a_2,a_3 / a_4\}.$

Supercharacter theories arise in several natural ways.
First, 
let $A$ be a group acting on $G$ by automorphisms.
Then $A$ also acts on $\Irr(G)$,
and the orbits of this action 
yield a supercharacter theory of $G$ \cite[section 1]{diaconis_isaacs},
which we say \defnstyle{comes from automorphisms}
or \defnstyle{comes from $A$}.
(If $G$ is abelian, then the superclass of $g\in G$ is simply its orbit $g^A$.)
%Elements whose conjugacy classes lie in the same orbit are joined into the same part.
%We let $\Aut(G)$ denote the set of all automorphisms of $G$, and each supercharacter theory from automorphisms comes from a subgroup of $\Aut(G)$.
In particular, $\A{G}$ will denote the supercharacter theory which comes from the full automorphism group $\Aut(G)$.

Another construction is the $*$-product,
which builds a supercharacter theory of a group $G$
out of supercharacter theories of a normal subgroup $N$ and of the quotient $G/N$.
Let $\CCC=(\XX,\KK)\in\Sup(N)$ be invariant under conjugation by $G$, and let $\DDD=(\YY,\LL)\in\Sup(G/N)$.
The partition $\LL$ of $G/N$ yields a partition of $G$ into unions of $N$-cosets,
one part of which is $N$.
By replacing that part with the partition $\KK$ of $N$, we obtain a partition of $G$
into the superclasses of a supercharacter theory $\CCC*\DDD\in\Sup(G)$
defined in \cite{ssr}, which is called a \defnstyle{$*$-product over $N$}.
It is a special case of a more general construction called a \defnstyle{$\triangle$-product};
we refer the reader to \cite{ssr} for details.

A third construction is possible if $G$ is the direct product of subgroups $A$ and $B$.
If $\aaa\in\Sup(A)$ and $\bbb\in\Sup(B)$,
then there exists a \defnstyle{direct product} $\aaa\by\bbb\in\Sup(G)$
such that for all $x\in A$ and $y\in B$, 
the superclass 
$$[xy]_{\aaa\by\bbb} = [x]_\aaa [y]_\bbb = \{x'y': x'\in [x]_\aaa, y'\in [y]_\bbb\}.$$
See \cite{ssr} for details.

In addition to the preceding constructions, 
every group $G$ also has
its maximal supercharacter theory $\Max{G}$ with superclasses $\{\{1\},G-\{1\}\}$
and its minimal supercharacter theory $\Min{G}$,
whose superclasses are simply the conjugacy classes of $G$.
For all $\EEE \in \Sup(G)$,
we say a subgroup $N\leq G$ is \defnstyle{$\EEE$-normal} if $N$ is a union of superclasses of $\EEE$.

\medskip

The subject of this paper is lattice-theoretic properties of the set $\Sup(\Cyc{n})$.
Recall that a lattice is a partially ordered set in which any two elements 
$a$ and $b$ have a unique least upper bound or \defnstyle{join} $a\join b$ and greatest lower bound or \defnstyle{meet} $a\meet b$.
We say $b$ \defnstyle{covers} $a$ and write $a\cov b$
if $a<b$ and no element $c$ satisfies $a<c<b$.
The set $\Part(S)$ of all set-partitions of a set $S$ is one natural example of a lattice.
Viewed as a subset of $\Part(G)$, the set $\Sup(G)$ contains
a maximal element $\Max{G}$ and minimal element $\Min{G}$,
so it is itself a lattice;
this partial order is also compatible with the partitions of characters \cite[Corollary 3.4]{ssr}.
In particular, 
the join of two supercharacter theories is simply their join as set-partitions,
although their meet in $\Sup(G)$ is generally not their meet as elements of $\Part(G)$ \cite[Proposition 3.3]{ssr}.

%A bijection $\alpha$ between lattices is called a \defnstyle{lattice isomorphism} 
%if it preserves the partial order.  In particular, 
%$\alpha(x\join y)= \alpha(x) \join \alpha(y)$ and $\alpha(x \meet y)= \alpha(x) \meet \alpha(y)$ for all $x,y\in\XX$.

The general definition of semimodularity is somewhat involved \cite[p.~142]{szasz},
but for lattices of finite length such as $\Sup(G)$,
Birkhoff's original formulation is equivalent: %\cite[Definition 5.3]{abbott} 
\begin{definition}[{\cite[{\S8}]{birkhoff}}]
A lattice $L$ of finite length is said to be \defnstyle{upper semimodular} if the following condition is satisfied
for all $a,b\in L$:
$$ \mbox{~if~}a \meet b \cov a,b, \mbox{~then~} a,b \cov a \join b.$$
In other words, if $a$ and $b$ cover their meet, then they are both covered by their join.
Likewise, $L$ is \defnstyle{lower semimodular} if for all $a, b \in L$,
$$ \text{ if } a,b \cov a \join b \text{ then } a \meet b \cov a,b;$$
that is, if $a$ and $b$ are covered by their join, then they cover their meet.
A lattice is \defnstyle{modular} if it is both upper and lower semimodular.
\end{definition}

\medskip
We shall make frequent use of the following theorem of Leung and Man, % \cite[Theorem 3.7]{leungman}, 
as rephrased in \cite[Theorem 8.2]{ssr}:
\begin{theorem}[{\cite[Theorem 3.7]{leung_man1996}}]\label{thm:classification}
  Let $G$ be a finite cyclic group and let $\CCC$ be a non-maximal supercharacter theory of $G$.
  Then at least one of the following is true:
  \begin{enumerate}
    \item $\CCC$ comes from automorphisms
    \item $\CCC$ is a nontrivial direct product
    \item $\CCC$ is a nontrivial $\triangle$-product
  \end{enumerate}
\end{theorem}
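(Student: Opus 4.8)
The plan is to recast the statement in the language of Schur rings, exploiting the bijection between supercharacter theories of the abelian group $\Cyc{n}$ and Schur rings over $\Cyc{n}$: the superclass sums $\hat{K}$ span a subalgebra $S \sseq \C[\Cyc{n}]$ that contains $\hat{1}$ and $\widehat{\Cyc{n}}$, is closed under the entrywise product, and is stable under $g \mapsto g^{-1}$. Lemma \ref{superclassproducts} is exactly the closure under multiplication that makes $S$ a ring, and it is the basic tool throughout: the nonnegative integrality of the structure constants severely limits how the superclass sums can interact. Under this dictionary, the three conclusions correspond to $\CCC$ being an \emph{orbit} Schur ring, a \emph{tensor} (direct) product, and a \emph{wedge} ($\triangle$-)product, respectively.

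The engine of the argument is the lattice of $\CCC$-normal subgroups of $\Cyc{n}$---the subgroups that are unions of superclasses---which embeds into the distributive divisor lattice of $n$. The key reduction is a wedge criterion: whenever there is a proper nontrivial $\CCC$-normal subgroup $N$ such that each superclass lying outside $N$ is a full union of $N$-cosets compatible with a supercharacter theory of $\Cyc{n}/N$, the theory $\CCC$ is a $\triangle$-product over $N$, so conclusion (3) holds. The whole proof thus amounts to showing that if neither (1) nor (2) holds, then such an $N$ must exist.

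I would then split on the shape of $n$. If $n$ has at least two distinct prime divisors, write $\Cyc{n} = A \by B$ with $|A|$ and $|B|$ coprime and both nontrivial; using coprimality together with Lemma \ref{superclassproducts} to control how superclasses meet the cosets of $A$ and of $B$, one shows that either the restricted data reassemble $\CCC$ as a genuine direct product $\aaa \by \bbb$, giving (2), or else the failure of this factorization produces a $\CCC$-normal subgroup meeting the wedge criterion, giving (3). If instead $n = p^k$ is a prime power, the subgroup lattice is the single chain $1 < \Cyc{p} < \cdots < \Cyc{p^k}$, and a direct analysis of the multiplication of superclass sums along this chain shows that $\CCC$ is either the orbit theory of a subgroup of $\Aut(\Cyc{p^k}) \cong (\Zint/p^k)^\times$, which is (1), or a wedge product over one of the intermediate subgroups, which is (3). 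The maximal theory $\Max{\Cyc{n}}$ is the one genuinely indecomposable possibility---it is an orbit ring only when $n$ is prime---which is precisely why it is excluded by hypothesis.

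I expect the prime-power case to be the main obstacle. Excluding \emph{exotic} Schur rings over $\Cyc{p^k}$---those that are neither orbit rings nor wedge products---is the technical heart of Leung and Man's analysis \cite{leung_man1996} and rests on careful bookkeeping of the cardinalities and products of the basic sets rather than on any single clean identity; the cyclotomic structure of $\Cyc{p^k}$ and the integrality constraints of Lemma \ref{superclassproducts} have to be pushed quite far. By comparison, the gluing in the composite case is routine once the primary components are understood, though one must verify that the direct product obtained is nontrivial and that any theory used on a subgroup in a $\triangle$-product is genuinely invariant under the ambient group.
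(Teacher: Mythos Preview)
The paper does not prove this theorem at all: it is quoted as \cite[Theorem 3.7]{leung_man1996} (in the rephrasing of \cite[Theorem 8.2]{ssr}) and used as a black box throughout. So there is no ``paper's own proof'' to compare against.

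Your proposal is not really a proof either---it is a strategic outline that, at the crucial moment, explicitly defers to the same source: you write that excluding exotic Schur rings over $\Cyc{p^k}$ ``is the technical heart of Leung and Man's analysis \cite{leung_man1996}'' and concede it ``rests on careful bookkeeping \ldots\ rather than on any single clean identity.'' That is an honest assessment, but it means your write-up is a roadmap to the literature rather than an independent argument. The dictionary with Schur rings, the role of the lattice of $\CCC$-normal subgroups, and the coprime/prime-power split you describe are indeed the right scaffolding for the Leung--Man proof, so nothing you say is wrong; it simply does not go beyond what the paper already does, namely cite \cite{leung_man1996}. If you intend this as a genuine proof, the gap is precisely the $p$-primary case you flag: you would need to actually carry out the exclusion of non-orbit, non-wedge Schur rings over $\Cyc{p^k}$, not just point to where it lives.
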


This theorem implies several special cases, which can of course be proved more easily.
If $G$ is cyclic of prime order, 
   then all of its supercharacter theories come from automorphisms.
If $G$ is cyclic of order $pq$ or $p^2$ and $\CCC\in\Sup(G)$, 
   then $\CCC$ either 
      is maximal, 
      comes from automorphisms, 
      or is a nontrivial $*$-product.

\medskip
We close this section with the initial result that 
the lattice $\Sup(\Cyc{p})$ is modular.

\begin{lemma}\label{Aut(G)}
Let $G$ be a cyclic group. Let $C$ be the lattice of subgroups of $\Aut(G)$,
%%%%%%ORR%%%%%
%\A = \{supercharacter theories from automorphisms\}
%$\alpha: C \mapsto A$ is an isomorphism.
and let $L$ be the set of supercharacter theories of $G$ that come from automorphisms.
For each subgroup $H$ of $\Aut(G)$, 
let $\alpha(H)\in L$ be the supercharacter theory of $G$ that comes from $A$. 
Then $\alpha$ is a lattice isomorphism from $C$ to $L$.
\end{lemma}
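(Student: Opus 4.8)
The plan is to realize $\alpha$ as an \emph{order} isomorphism: since $C$ is a lattice, any bijection onto $L$ that both preserves and reflects the partial order automatically carries meets to meets and joins to joins, and so is a lattice isomorphism. I would therefore reduce the statement to three claims---that $\alpha$ is surjective, injective, and order-equivalent in both directions---and prove all three with a single device, namely the action of $\Aut(G)$ on a fixed generator.

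Fix a generator $g$ of $G$. Because an automorphism of a cyclic group is determined by its value on a generator, the evaluation map $\Aut(G)\to G$, $h\mapsto h(g)$, is injective. Since $G$ is abelian, the superclasses of $\alpha(H)$ are exactly the $H$-orbits on $G$, so the superclass $[g]_{\alpha(H)}=\{h(g):h\in H\}$ is precisely the image of $H$ under evaluation. The crux is the recovery formula
$$ H=\{\,h\in\Aut(G): h(g)\in[g]_{\alpha(H)}\,\}. $$
Indeed, if $h(g)=h'(g)$ for some $h'\in H$, then $h^{-1}h'$ fixes the generator $g$ and hence is the identity, so $h=h'\in H$; the reverse inclusion is immediate. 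Thus the orbit of a single generator already pins down $H$.

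Surjectivity is immediate, as $L$ is defined to be exactly the set of theories of the form $\alpha(H)$. Injectivity follows from the recovery formula, since $\alpha(H)$ determines the orbit $[g]_{\alpha(H)}$, which in turn determines $H$. For monotonicity, recall that the order on $\Sup(G)$ is inherited from refinement in $\Part(G)$: if $H_1\le H_2$, then every $H_2$-orbit is a union of $H_1$-orbits, so $\alpha(H_1)\le\alpha(H_2)$. Conversely, if $\alpha(H_1)\le\alpha(H_2)$ then in particular $[g]_{\alpha(H_1)}\sseq[g]_{\alpha(H_2)}$, and applying the recovery formula to each side gives $H_1\sseq H_2$. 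Hence $\alpha$ is an order isomorphism from the lattice $C$ onto the poset $L$, which forces $L$ to be a lattice and $\alpha$ to be a lattice isomorphism.

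I expect the only genuine subtlety to be the recovery formula---that distinct subgroups of $\Aut(G)$ really do yield distinct orbit partitions. This is exactly where the cyclicity of $G$ is essential: it is the freeness of the $\Aut(G)$-action on a generator that makes evaluation injective and lets us read $H$ off from $[g]_{\alpha(H)}$. For a non-cyclic group this step can fail, and $\alpha$ need not be injective.
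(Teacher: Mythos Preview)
Your proof is correct and follows essentially the same route as the paper's: both arguments use surjectivity by definition, injectivity via the orbit of a fixed generator (exploiting that an automorphism of a cyclic group is determined by its value there), and order preservation in both directions by comparing the generator's orbits. Your ``recovery formula'' and the closing remark that an order isomorphism automatically respects meets and joins are slight elaborations, but the core idea is identical.
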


\begin{proof}
%We first show that $\alpha$ is one-to-one.
Certainly $\alpha$ is surjective,
so suppose $\alpha(A)=\alpha(B)$
for some $A,B \leq\Aut(G)$.
Let $g$ be a generator of $G$.
Then the the superclass of $\alpha(A)$ containing $g$ is equal to the superclass of $\alpha(B)$ containing $g$,
so $g^A = g^B$.
As every automorphism of a cyclic group is determined by where it sends a generator,
it follows that $A=B$, so $\alpha$ is injective.

It remains to show that $\alpha$ preserves the partial order.
%$$A \pleq B \text{ if and only if } \alpha(A) \pleq \alpha(B).$$
If $\alpha(A) \pleq \alpha(B)$, then $g^A\sseq g^B$ and hence $A \leq B$.  
Conversely, if $A \leq B$, then $x^A\sseq x^B$ for each $x\in G$,
%so every superclass of $\alpha(A)$ is contained in a superclass of $\alpha(B)$, 
and thus $\alpha(A) \pleq \alpha(B)$.
We conclude that $\alpha$ is a lattice isomorphism.
\end{proof}

\begin{corollary}\label{Cp_modular}
  Let $G$ be cyclic of order $p$.
  Then the lattice $\Sup(G)$ is modular,
  and hence both upper and lower semimodular.
\end{corollary}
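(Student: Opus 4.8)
The plan is to identify $\Sup(G)$ with a lattice whose modularity is classical. The crucial first observation is that, because $G$ is cyclic of prime order, the special case of Theorem~\ref{thm:classification} noted immediately after it tells us that \emph{every} supercharacter theory of $G$ comes from automorphisms. Hence the set $L$ of automorphism-induced supercharacter theories coincides with the entire lattice $\Sup(G)$; there is nothing outside $L$ to worry about.

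Next I would invoke Lemma~\ref{Aut(G)}: the map $\alpha$ is a lattice isomorphism from the subgroup lattice $C$ of $\Aut(G)$ onto $L$, which we have just seen equals $\Sup(G)$. Thus $\Sup(G)$ is lattice-isomorphic to the subgroup lattice of $\Aut(\Cyc{p})$. Since $\Aut(\Cyc{p})$ is cyclic of order $p-1$, its subgroup lattice is isomorphic to the lattice of divisors of $p-1$ ordered by divisibility. This divisor lattice is a direct product of chains, one for each prime-power factor of $p-1$, and is therefore distributive; in particular it is modular. Because modularity is preserved under lattice isomorphism, $\Sup(G)$ is modular, and a modular lattice is by definition both upper and lower semimodular.

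The main obstacle here is essentially nonexistent: all the substantive content has already been packed into Lemma~\ref{Aut(G)} together with the cited prime-order special case of Theorem~\ref{thm:classification}. The only point requiring a moment's care is the classical fact that the subgroup lattice of a cyclic group is modular (indeed distributive); one may either cite a standard lattice-theory reference or simply observe, as above, that it is the divisor lattice of $p-1$, which is a product of chains.
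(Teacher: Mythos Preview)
Your proof is correct and follows essentially the same route as the paper: reduce to the subgroup lattice of $\Aut(G)$ via Lemma~\ref{Aut(G)} and the prime-order case of Theorem~\ref{thm:classification}, then invoke a classical modularity fact. The only cosmetic difference is that the paper cites the modularity of the normal-subgroup lattice of the abelian group $\Aut(G)$, whereas you use the slightly sharper observation that $\Aut(\Cyc{p})$ is cyclic, so its subgroup lattice is the (distributive) divisor lattice of $p-1$.
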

\begin{proof}
  Because every supercharacter theory of $G$ comes from automorphisms,
  Lemma \ref{Aut(G)} implies that 
  $\Sup(G)$ is isomorphic to the lattice of subgroups of $\Aut(G)$.
  Now the lattice of all normal subgroups of a group is modular, as is well-known 
  (e.g., \cite[Theorem 11]{birkhoff});
  %(e.g., \cite[p.~6]{suzuki_lattices});
  since $\Aut(G)$ is abelian, 
  it follows that its subgroup lattice is modular
  and so is $\Sup(G)$.
\end{proof}

\section{Sublattices of $*$-Products}\label{sect_starprod}

Let $G$ be a finite group, and let $N\norm G$.
Let $\Sup_G(N)$ denote the set of supercharacter theories of $N$ 
that are invariant under conjugation by $G$.
If $G$ is abelian then $\Sup_G(N)=\Sup(N)$, of course,
but we prefer to state the following lemmas in full generality.
In this section we shall investigate lattice-theoretic properties of the set 
\begin{equation*}
  \Sup_G(N)*\Sup(G/N)=\{\xxx*\yyy: \xxx\in\Sup_G(N),~\yyy\in\Sup(G/N)\}
\end{equation*}
of all $*$-products over $N$.
Recall from \cite{ssr} that $\eee\in\Sup(G)$ is a $*$-product over $N$ 
%if and only if $N$ is $\eee$-normal and every superclass of $\eee$ outside $N$ is a union of $N$-cosets.
%This can also be expressed by saying $\eee$ is a $*$-product over $N$ 
if and only if $\Min[G]{N}*\Min{G/N}\pleq \eee \pleq \Max{N}*\Max{G/N}$,
where $\Min[G]{N}$ denotes the minimal $G$-invariant supercharacter theory of $N$.
It follows that for $\aaa,\bbb,\ccc\in\Sup(G)$ with $\aaa\leq \bbb\leq \ccc$,
if $\aaa$ and $\ccc$ are $*$-products over $N$ then so is $\bbb$;
in other words, the set $\Sup_G(N)*\Sup(G/N)$ is \defnstyle{convex}.

The following lemma is clear from the definition of the $*$-product, so we omit its proof.
\begin{lemma}\label{lemprec}
Let $G$ be a group, and let $N\norm G$. Let $\xxx_1, \xxx_2 \in \Sup_G(N)$ and $\yyy_1, \yyy_2 \in \Sup(G/N)$. 
Then 
 $\xxx_1*\yyy_1 \pleq \xxx_2*\yyy_2$
 if and only if 
 $\xxx_1 \pleq \xxx_2$ and $\yyy_1\pleq \yyy_2$. 
Furthermore, 
 $\xxx_1*\yyy_1 = \xxx_2*\yyy_2$
 if and only if $\xxx_1=\xxx_2$ and $\yyy_1=\yyy_2$.
\end{lemma}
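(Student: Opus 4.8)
The plan is to unpack the definition of the $*$-product and then verify the refinement condition in each direction, the only real subtlety being the distinction between superclasses lying inside $N$ and those lying outside. Recall that for $\CCC\in\Sup_G(N)$ and $\DDD\in\Sup(G/N)$ the superclasses of $\CCC*\DDD$ come in two kinds: the superclasses of $\CCC$, which partition $N$; and the preimages under the canonical projection $\pi\colon G\to G/N$ of the \emph{non-identity} superclasses of $\DDD$, which partition $G\setminus N$. The crucial bookkeeping fact is that a superclass of $\CCC*\DDD$ meets $N$ if and only if it is a superclass of $\CCC$ (in which case it is contained in $N$). This holds because the identity coset is a singleton superclass in every supercharacter theory of $G/N$, so its preimage $N$ is carved up entirely by the superclasses of $\CCC$ and no off-$N$ part can intrude into $N$. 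Throughout I use that the order on $\Sup(G)$ is refinement of partitions, so $\aaa\pleq\bbb$ means every superclass of $\aaa$ is contained in a superclass of $\bbb$.

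For the easy direction, suppose $\xxx_1\pleq\xxx_2$ and $\yyy_1\pleq\yyy_2$; I would show every superclass of $\xxx_1*\yyy_1$ lies in one of $\xxx_2*\yyy_2$. An on-$N$ superclass is a part of $\xxx_1$, hence contained in a part of $\xxx_2$ by hypothesis, which is an on-$N$ superclass of $\xxx_2*\yyy_2$. An off-$N$ superclass is $\pi^{-1}(M_1)$ for some non-identity superclass $M_1$ of $\yyy_1$; then $M_1\sseq M_2$ for a superclass $M_2$ of $\yyy_2$, and because $M_1$ avoids the identity coset so does $M_2$, whence $\pi^{-1}(M_1)\sseq\pi^{-1}(M_2)$ is the desired containment in an off-$N$ superclass of $\xxx_2*\yyy_2$.

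For the converse, suppose $\xxx_1*\yyy_1\pleq\xxx_2*\yyy_2$. To recover $\xxx_1\pleq\xxx_2$, take a superclass $K_1$ of $\xxx_1$; it is an on-$N$ superclass of $\xxx_1*\yyy_1$, so it lies inside some superclass $S$ of $\xxx_2*\yyy_2$, and since $K_1\sseq N$ forces $S$ to meet $N$, the bookkeeping fact makes $S$ a superclass of $\xxx_2$. To recover $\yyy_1\pleq\yyy_2$, take a non-identity superclass $M_1$ of $\yyy_1$ (the identity coset is trivial); then $\pi^{-1}(M_1)$ is an off-$N$ superclass lying inside some $S$, and as $\pi^{-1}(M_1)$ avoids $N$ the part $S$ must be off-$N$, say $S=\pi^{-1}(M_2)$, so applying $\pi$ and using surjectivity gives $M_1\sseq M_2$. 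The equality statement then follows at once by antisymmetry: from $\xxx_1*\yyy_1=\xxx_2*\yyy_2$ one gets inequalities in both directions, hence $\xxx_1\pleq\xxx_2\pleq\xxx_1$ and likewise for the $\yyy$'s, so the factors coincide.

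The main obstacle, to the extent there is one, is simply keeping the on-$N$ and off-$N$ superclasses straight and justifying that they never mix, which reduces to the single observation that the identity coset is always its own superclass; everything else is routine refinement-chasing, which is presumably why the authors chose to omit the argument.
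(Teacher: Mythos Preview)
Your argument is correct and is exactly the routine unpacking of the $*$-product definition that the paper has in mind; indeed the authors omit the proof entirely, writing that the lemma ``is clear from the definition of the $*$-product.'' Your key observation---that the on-$N$ and off-$N$ superclasses never mix because $\{1\}$ (and hence the identity coset) is always a singleton superclass---is precisely what makes the refinement-chasing go through, and your final remark correctly anticipates why the authors felt no proof was needed.
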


Recall that a subset of a lattice is called a \defnstyle{sublattice}
if it is closed under joins and meets.
%and a sublattice $X$ of $L$ is called \defnstyle{convex} if 
%whenever $a,b,c\in L$ and $a,c\in X$, if $a\pleq b\pleq c$ then $b\in X$.
Also recall that if $X$ and $Y$ are lattices,
then their direct union $X\otimes Y$ 
is the Cartesian product $X\times Y$
under the partial order defined by letting $(x_1,y_1) \pleq (x_2,y_2)$ if and only if $x_1 \pleq x_2$ and $y_1 \pleq y_2$.

\begin{lemma}\label{lemClosure}
Let $G$ be a finite group, and let $N\norm G$.
Let $A$ be a convex sublattice of $\Sup_G(N)$, and $B$ be a convex sublattice of $\Sup(G/N)$. 
Then 
$$A*B=\{\aaa*\bbb : \aaa \in A, \bbb\in B\}$$ 
is a convex sublattice of $\Sup(G)$ 
which is lattice isomorphic to $A\otimes B$.
\end{lemma}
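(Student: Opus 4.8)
The plan is to exhibit the isomorphism explicitly and to reduce every lattice-theoretic claim to Lemma \ref{lemprec} together with the interval description of $*$-products recalled above. Since $A$ and $B$ are themselves lattices, $A\otimes B$ is a lattice under the coordinatewise order, and I would define $\phi\colon A\otimes B\to A*B$ by $\phi(\aaa,\bbb)=\aaa*\bbb$. This map is surjective by the definition of $A*B$, and the equality clause of Lemma \ref{lemprec} makes it injective. For the order, $(\aaa_1,\bbb_1)\pleq(\aaa_2,\bbb_2)$ in $A\otimes B$ means $\aaa_1\pleq\aaa_2$ and $\bbb_1\pleq\bbb_2$, which by the inequality clause of Lemma \ref{lemprec} is equivalent to $\aaa_1*\bbb_1\pleq\aaa_2*\bbb_2$; hence $\phi$ is an order-isomorphism onto $A*B$ regarded as a subposet of $\Sup(G)$. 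All that then remains is to check that this subposet is a convex sublattice, i.e.\ that joins and meets taken in $\Sup(G)$ stay inside $A*B$ and are computed coordinatewise.

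The central step is to show that for $\aaa_1,\aaa_2\in A$ and $\bbb_1,\bbb_2\in B$ the join in $\Sup(G)$ satisfies $(\aaa_1*\bbb_1)\join(\aaa_2*\bbb_2)=(\aaa_1\join\aaa_2)*(\bbb_1\join\bbb_2)$. By Lemma \ref{lemprec} the right-hand side is an upper bound of both factors, so the join $j:=(\aaa_1*\bbb_1)\join(\aaa_2*\bbb_2)$ lies below it, while $j\geq\aaa_1*\bbb_1\geq\Min[G]{N}*\Min{G/N}$. Thus $j$ lies in the interval $[\Min[G]{N}*\Min{G/N},\,\Max{N}*\Max{G/N}]$, and the characterization of $*$-products recalled above certifies that $j$ is itself a $*$-product, say $j=\aaa_3*\bbb_3$. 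Feeding the inequalities $\aaa_i*\bbb_i\pleq j$ back through Lemma \ref{lemprec} gives $\aaa_1\join\aaa_2\pleq\aaa_3$ and $\bbb_1\join\bbb_2\pleq\bbb_3$, which yields the reverse inequality and hence equality. The meet is handled by the symmetric argument. Because $A$ and $B$ are sublattices, $\aaa_1\join\aaa_2$ and $\aaa_1\meet\aaa_2$ lie in $A$ and likewise for $B$, so these joins and meets remain in $A*B$; thus $A*B$ is a sublattice, and the coordinatewise formulas show that $\phi$ commutes with $\join$ and $\meet$, making it a lattice isomorphism.

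Convexity is then quick: if $\aaa_1*\bbb_1\pleq\eee\pleq\aaa_2*\bbb_2$ in $\Sup(G)$, convexity of the whole set $\Sup_G(N)*\Sup(G/N)$ forces $\eee=\aaa*\bbb$ to be a $*$-product; Lemma \ref{lemprec} sandwiches $\aaa_1\pleq\aaa\pleq\aaa_2$ and $\bbb_1\pleq\bbb\pleq\bbb_2$, and the convexity of $A$ in $\Sup_G(N)$ and of $B$ in $\Sup(G/N)$ places $\aaa\in A$ and $\bbb\in B$, so $\eee\in A*B$.

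I expect the meet to be the only genuine obstacle. Since the meet in $\Sup(G)$ is \emph{not} the meet of the underlying set-partitions, I cannot compute $(\aaa_1*\bbb_1)\meet(\aaa_2*\bbb_2)$ by hand; the whole force of the argument comes from using the interval description of $*$-products to guarantee that the ambient meet is again a $*$-product, after which Lemma \ref{lemprec} pins down its two coordinates. Once that point is in place, the remaining verifications are routine applications of Lemma \ref{lemprec}.
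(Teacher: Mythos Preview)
Your proposal is correct and follows essentially the same route as the paper: both use the interval characterization of $*$-products to see that the ambient meet (and join) of two elements of $A*B$ is again a $*$-product, then invoke Lemma~\ref{lemprec} to identify its two factors as $\aaa_1\meet\aaa_2$ and $\bbb_1\meet\bbb_2$, and both handle convexity and the isomorphism via the obvious map $(\aaa,\bbb)\mapsto\aaa*\bbb$ together with Lemma~\ref{lemprec}. Your write-up is a bit more explicit in deriving the two inequalities that pin down the join, but the argument is the same.
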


\begin{proof}
  Let $\xxx_1=\aaa_1*\bbb_1$ and $\xxx_2=\aaa_2*\bbb_2$ be arbitrary elements of $A*B$.
  %Let $\xxx_1, \xxx_2\in A*B$.
  Certainly $\Min[G]{N}*\Min{G/N}\pleq \xxx_i\pleq \Max{N}*\Max{G/N}$ 
  for $i=1,2$,
  so $\xxx_1\meet\xxx_2$ is also a $*$-product over $N$.
  It follows then from Lemma \ref{lemprec} 
  that
  $$(\aaa_1*\bbb_1) \meet (\aaa_2*\bbb_2) = (\aaa_1\meet \aaa_2) * (\bbb_1 \meet \bbb_2),$$
  which lies in $A*B$ because $A$ and $B$ are sublattices.
  Thus $A*B$ is closed under meets; a similar argument shows it to be closed under joins as well,
  and thus it is a sublattice.
  
  To prove convexity, suppose $\aaa_1*\bbb_1 \pleq \ccc \pleq \aaa_2*\bbb_2$
  for some $\ccc\in\Sup(G)$.
  Then $\ccc$ is a $*$-product over $N$;
  writing $\ccc=\uuu*\vvv$ for some $\uuu\in\Sup_G(N)$ and $\vvv\in\Sup(G/N)$,
  we have
  $\aaa_1*\bbb_1 \leq \uuu*\vvv \leq \aaa_2*\bbb_2$,
  so $\aaa_1\pleq \uuu\pleq\aaa_2$ and $\bbb_1\pleq\vvv\pleq\bbb_2$
  by Lemma \ref{lemprec}.
  Then $\uuu\in A$ and $\vvv\in B$ by the convexity of those lattices,
  so $\ccc\in A*B$ as desired.
  
  Finally, the mapping from $A \otimes B$ to $A*B$ 
  which maps $(\aaa,\bbb) \mapsto \aaa*\bbb$
  is clearly surjective and is injective by Lemma \ref{lemprec};
  moreover,
  by definition $(\aaa_1,\bbb_1) \pleq (\aaa_2,\bbb_2)$ in $A \otimes B$
  if and only if $\aaa_1 \pleq \aaa_2$ and $\bbb_1 \pleq \bbb_2$,
  which by Lemma \ref{lemprec} is true if and only if
  $\aaa_1*\bbb_1 \pleq \aaa_2*\bbb_2$.
  Thus this map is a lattice isomorphism from $A\otimes B$ to $A*B$.
\end{proof}

The following corollary, which provides infinite families of 
groups for which $\Sup(G)$ is not upper (or lower) semimodular,
will be essential to our main theorems in sections 4 and 5.
\begin{corollary}\label{inheritance}
Let $G$ be an abelian group, and let $N\leq G$.
\begin{enumerate}
\item If $\Sup(G)$ is upper semimodular, then so is $\Sup(N)$.
\item If $\Sup(G)$ is lower semimodular, then so is $\Sup(N)$.
\end{enumerate}
In particular, if $\Cyc{n}$ is not upper (or lower) semimodular for some $n\in\N$,
then $\Cyc{nk}$ also fails to be upper (or lower) semimodular for all $k\in\N$.
\end{corollary}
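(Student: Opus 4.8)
The plan is to realize $\Sup(N)$ as a convex sublattice of $\Sup(G)$, and then to show that upper and lower semimodularity are both inherited by convex sublattices. Since $G$ is abelian, every subgroup is normal, so $N\norm G$, and moreover $\Sup_G(N)=\Sup(N)$; this puts us in a position to invoke Lemma~\ref{lemClosure}.

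First I would apply that lemma with $A=\Sup(N)$, which is trivially a convex sublattice of $\Sup_G(N)=\Sup(N)$, and $B=\{\Min{G/N}\}$, the one-element set containing the minimal supercharacter theory of $G/N$. A singleton is vacuously closed under meets and joins and is convex, hence a convex sublattice of $\Sup(G/N)$, so Lemma~\ref{lemClosure} yields that
$$A*B=\{\xxx*\Min{G/N}:\xxx\in\Sup(N)\}$$
is a convex sublattice of $\Sup(G)$ that is lattice isomorphic to $\Sup(N)\otimes\{\Min{G/N}\}\cong\Sup(N)$. Thus $\Sup(N)$ embeds as a convex sublattice of $\Sup(G)$.

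The heart of the argument is the general fact that a convex sublattice $M$ of a finite-length lattice $L$ inherits both upper and lower semimodularity from $L$. The key observation is that for $a,b\in M$ the meet $a\meet b$ and join $a\join b$ computed in $M$ coincide with those computed in $L$ (because $M$ is a sublattice), and that for $a<b$ in $M$ one has $a\cov b$ in $M$ if and only if $a\cov b$ in $L$. The forward implication here is immediate, while the reverse uses convexity: any $c\in L$ with $a<c<b$ already lies in $M$. Granting this, if $L$ is upper semimodular and $a\meet b\cov a,b$ in $M$, then the same covering relations hold in $L$, whence $a,b\cov a\join b$ in $L$ and therefore in $M$ as well; the lower semimodular case is dual. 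I expect this coincidence-of-covers observation to be the main point of the proof, since once it is established the semimodularity of $\Sup(N)$ follows at once from its embedding as a convex sublattice of $\Sup(G)$.

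Finally, the concluding statement follows by applying parts (1) and (2) with $G=\Cyc{nk}$ and $N=\Cyc{n}$---noting that $\Cyc{nk}$ contains a unique subgroup isomorphic to $\Cyc{n}$ because $n\mid nk$---and contraposing: if $\Sup(\Cyc{n})$ fails to be upper (respectively lower) semimodular, then so does $\Sup(\Cyc{nk})$ for every $k\in\N$.
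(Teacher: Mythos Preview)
Your argument is correct and follows the paper's route almost exactly: embed $\Sup(N)$ as a convex sublattice of $\Sup(G)$ via Lemma~\ref{lemClosure} (the paper uses the singleton $\{\Max{G/N}\}$ rather than $\{\Min{G/N}\}$, but either works), then use that convex sublattices inherit semimodularity (the paper cites \cite[Theorem~120]{donnellan} instead of proving it directly as you do).

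One small slip in your direct argument: you have the labels on the two implications swapped. It is the \emph{forward} direction ($a\cov b$ in $M$ implies $a\cov b$ in $L$) that requires convexity---your sentence ``any $c\in L$ with $a<c<b$ already lies in $M$'' is exactly the convexity argument needed there---while the reverse direction ($a\cov b$ in $L$ implies $a\cov b$ in $M$) is the trivial one, since $M\subseteq L$. Both directions are used in your semimodularity deduction, so once the labels are corrected the proof goes through unchanged.
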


\begin{proof}
%  It is known that every convex sublattice of an upper semimodular lattice is upper semimodular \cite[Theorem 120]{donnellan}. 
%  If $\Sup(G)$ is upper semimodular, 
%  then its convex sublattice $B=\{\xxx*\Max{G/N} | \xxx \in \Sup(N) \}$ is upper semimodular, 
%  and hence $\Sup(N)$ is upper semimodular since $\Sup(N)$ is isomorphic to $B$ by Lemma \ref{lemClosure}.
%  
  Suppose $\Sup(G)$ is upper semimodular.
  Then $\Sup(N)$ is lattice-isomorphic to $\Sup(N)\otimes\{\Max{G/N}\}$ and thus to $\Sup(N)*\{\Max{G/N}\}$ by Lemma \ref{lemClosure},
  which is a convex sublattice of $\Sup(G)$.
  Because every convex sublattice of an upper semimodular lattice is also upper semimodular \cite[Theorem 120]{donnellan},
  the first statement follows.
  The second statement follows from a similar argument,
  and the final claim holds since $\Cyc{n}$ embeds in $\Cyc{nk}$.
\end{proof}

Because $\Sup_G(N)*\Sup(G/N)$ is lattice-isomorphic to the direct union of lattices $\Sup_G(N)$ and $\Sup(G/N)$
by Lemma \ref{lemClosure},
the covering relationships are particularly nice.
%We next investigate covering within the sublattice $\Sup(N)*\Sup(G/N)$.
%\marginpar{\tiny\raggedright Do more by just isomorphism.}

\begin{lemma}\label{lemcover}
Let $G$ be a group and let $N\lhd G$. Let $\xxx_1,\xxx_2 \in \Sup_G(N)$ and $\yyy_1,\yyy_2 \in \Sup(G/N)$. 
Then $\xxx_1*\yyy_1 \cov \xxx_2*\yyy_2$ if and only if
either $\xxx_1=\xxx_2$ and $\yyy_1 \cov \yyy_2$, 
or $\yyy_1=\yyy_2$ and $\xxx_1 \cov \xxx_2$.
\end{lemma}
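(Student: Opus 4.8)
The statement is the familiar fact that in a direct product of lattices covers occur one coordinate at a time; here the role of the direct product is played by the set of $*$-products over $N$, which by Lemma \ref{lemClosure} is lattice-isomorphic to the direct union $\Sup_G(N)\otimes\Sup(G/N)$. The two tools I would use throughout are Lemma \ref{lemprec}, which lets me read off both the order and the equality of $*$-products coordinatewise, and the convexity of $\Sup_G(N)*\Sup(G/N)$ noted just before that lemma, which guarantees that any supercharacter theory lying between two $*$-products over $N$ is itself a $*$-product over $N$.

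For the ``if'' direction, suppose $\xxx_1=\xxx_2$ and $\yyy_1\cov\yyy_2$ (the case with the two factors interchanged is symmetric). Lemma \ref{lemprec} immediately gives $\xxx_1*\yyy_1<\xxx_2*\yyy_2$. To see this is a cover, I would take a hypothetical $\ccc$ with $\xxx_1*\yyy_1<\ccc<\xxx_2*\yyy_2$; convexity lets me write $\ccc=\uuu*\vvv$, and applying Lemma \ref{lemprec} to each inequality sandwiches $\xxx_1\pleq\uuu\pleq\xxx_2=\xxx_1$, forcing $\uuu=\xxx_1$, while the strictness of the two inequalities (again via Lemma \ref{lemprec}) forces $\yyy_1<\vvv<\yyy_2$. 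This contradicts $\yyy_1\cov\yyy_2$, so no such $\ccc$ exists.

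For the ``only if'' direction, suppose $\xxx_1*\yyy_1\cov\xxx_2*\yyy_2$. Lemma \ref{lemprec} gives $\xxx_1\pleq\xxx_2$ and $\yyy_1\pleq\yyy_2$ with the two not both equalities. First I would rule out the possibility that both inequalities are strict: if $\xxx_1<\xxx_2$ and $\yyy_1<\yyy_2$, then $\xxx_1*\yyy_2$ satisfies $\xxx_1*\yyy_1<\xxx_1*\yyy_2<\xxx_2*\yyy_2$ by Lemma \ref{lemprec}, contradicting the cover. Hence exactly one inequality is strict; say $\xxx_1=\xxx_2$ and $\yyy_1<\yyy_2$. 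If $\yyy_1\cov\yyy_2$ failed, there would be $\vvv$ with $\yyy_1<\vvv<\yyy_2$, and then $\xxx_1*\vvv$ would lie strictly between $\xxx_1*\yyy_1$ and $\xxx_2*\yyy_2$, again contradicting the cover; so $\yyy_1\cov\yyy_2$, as required.

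The argument is essentially bookkeeping with Lemma \ref{lemprec}, and I do not expect a serious obstacle. The one point that requires care---and the only place where anything beyond formal lattice theory enters---is the repeated appeal to convexity to guarantee that an intermediate element $\ccc$ is again a $*$-product over $N$, so that Lemma \ref{lemprec} may be applied to it; without this, the manufacture of a forbidden intermediate element in the ``if'' direction would not go through.
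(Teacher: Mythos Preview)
Your proof is correct and is essentially the paper's approach unpacked: the paper simply invokes the lattice isomorphism of Lemma~\ref{lemClosure} and the general fact about covers in a direct union of lattices, whereas you reprove that general fact from scratch using Lemma~\ref{lemprec} and convexity (exactly the ingredients underlying Lemma~\ref{lemClosure}). Your explicit remark that convexity is what lets an intermediate $\ccc$ be written as a $*$-product is spot on---that is precisely why covers in the sublattice $\Sup_G(N)*\Sup(G/N)$ agree with covers in $\Sup(G)$.
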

\begin{proof}
  This follows immediately from the corresponding statement for direct unions of lattices.\marginpar{ref?}
\end{proof}
%\begin{proof}
%First we claim that $\X_1 \cov \X_2$ if and only if $\X_1*\Y_1 \cov \X_2*\Y_1$. 
%For suppose $\X_1 \cov \X_2$; then $\X_1*\Y_1 \prec \X_2*\Y_1$.
%Now if $\X_1*\Y_1 \prec \R\prec \X_2*\Y_1$ for some $\R\in\Sup(G)$,
%then $\R$ must be a $*$-product over $N$ by Lemma \ref{lemClosure}.
%So $\R= \S* \Y_1$ for some $\S \in \Sup(N)$,
%and then it follows that $\X_1 \prec \S \prec \X_2$, a contradiction.
%Therefore, $\X_1*\Y_1 \cov \X_2 * \Y_1$. 
%
%On the other hand, suppose $\X_1*\Y_1 \cov \X_2 * \Y_1$\ldots****
%By a similar argument it can be shown that $Y_1 \cov Y_2$ if and only if $\X_1*\Y_1 \cov \X_1*\Y_2$.
%
%Now if $\X_1=\X_2$ and $\Y_1 \cov \Y_2$,
%then $\X_1*\Y_1 \cov \X_1*\Y_2=\X_2*\Y_2$. 
%Likewise if $\Y_1=\Y_2$ and $\X_1 \cov \X_2$, 
%then $\X_1*Y_1 \cov \X_2*\Y_2$.
%
%So now suppose that $\X_1*\Y_1 \cov \X_2*\Y_2$. 
%Then either $\X_1 \prec \X_2$ or $\Y_1 \prec \Y_2$ or both by Lemma \ref{lemprec}.
%But if $\X_1\not=\X_2$ and $\Y_1\ne \Y_2$,
%then $\X_1*\Y_1 \prec \X_1*\Y_2 \prec \X_2*\Y_2$, a contradiction. 
%Therefore, either $\X_1=\X_2$ or $\Y_1=\Y_2$.
%If $\X_1=\X_2$, then $\Y_1 \cov \Y_2$ by our earlier claim, and we are done.
%Likewise if $\Y_1=\Y_2$, then $\X_1 \cov \X_2$, and we are done.
%\end{proof}

\def\AA{\mathcal{A}}
\def\BB{\mathcal{B}}
\begin{lemma}\label{lemusmlsm}
  Let $G$ be a group and let $N\norm G$. 
  Let $\AA = \Sup_G(N)$ and $\BB = \Sup(G/N)$. 
  Then the sublattice $\AA*\BB$ of $\Sup(G)$
  %$\zz=\{\xxx*\yyy :  \xxx \in \xx, \yyy\in \yy \} $ of $\Sup(G)$ 
  is upper semimodular if and only if $\AA$ and $\BB$ are both upper semimodular; 
  similarly, $\AA*\BB$ is lower semimodular if and only if $\AA$ and $\BB$ are both lower semimodular.
\end{lemma}

\begin{proof}
By \cite[Theorem 121]{donnellan}, we have that $\AA\otimes\BB$ is upper (or lower) semimodular 
if and only if $\AA$ and $\BB$ are both upper (or lower) semimodular. 
The result then follows from the lattice isomorphism of Lemma \ref{lemClosure}.
\end{proof}

\section{Direct product supercharacter theories of $\Cyc{pq}$}\label{sect_directproducts}

Throughout this section, 
let $p$ and $q$ be distinct primes,
let $G$ be cyclic of order $pq$,
and let $P$ and $Q$ be its subgroups of orders $p$ and $q$, respectively.
Since $G=P\idp Q$,
we may consider the subset
$$\Sup(P) \by \Sup(Q) = \{\bbb\by\ccc: \bbb\in\Sup(P), \ccc\in\Sup(Q)\}$$
of direct products in $\Sup(G)$.
If $\bbb\in\Sup(P)$,
   $\ccc\in\Sup(Q)$,
   $x\in P$, and $y\in Q$,
   then by definition
   $$[xy]_{\bbb\by\ccc} = [x]_\bbb [y]_\ccc.$$

Recall from \cite{ssr} that if $\xxx\in\Sup(G)$ and $N\leq G$ is $\xxx$-normal,
then the parts of $\xxx$ which lie in $N$ form a supercharacter theory of $N$
denoted $\xxx_N$.
Then since $[1]_\bbb = \{1\} = [1]_\ccc$,
both $P$ and $Q$ are $(\bbb\by\ccc)$-normal and
$(\bbb\by\ccc)_P=\bbb$ and $(\bbb\by\ccc)_Q = \ccc$.

Direct products can be quite useful.
For example, they provide upper bounds for supercharacter theories that come from automorphisms.

\begin{lemma}\label{lem:dpinequality}
  Suppose $\zzz\in\Sup(G)$ comes from automorphisms.
  Then $\zzz\leq\zzz_P\by\zzz_Q$.
\end{lemma}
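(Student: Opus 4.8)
The plan is to unwind both sides at the level of superclasses and check directly that every superclass of $\zzz$ lies inside a single superclass of $\zzz_P\by\zzz_Q$, which is exactly the statement $\zzz\pleq\zzz_P\by\zzz_Q$ under the refinement order on $\Sup(G)$ (recall that the finer theory is the smaller one, since $\Min{G}\pleq\Max{G}$).

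First I would write $\zzz=\alpha(A)$ for some $A\leq\Aut(G)$, using Lemma \ref{Aut(G)}, so that the superclasses of $\zzz$ are precisely the orbits $g^A$ for $g\in G$. Since $P$ and $Q$ are the unique subgroups of $G$ of orders $p$ and $q$, every automorphism preserves them; hence they are unions of $A$-orbits, and so they are $\zzz$-normal. This guarantees that $\zzz_P$ and $\zzz_Q$ are defined and that $\zzz_P\by\zzz_Q$ makes sense. Moreover, for $x\in P$ the orbit $x^A$ is contained in $P$, so the superclass of $x$ in $\zzz_P$ is exactly $x^A$; likewise $[y]_{\zzz_Q}=y^A$ for each $y\in Q$.

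Next I would fix an arbitrary element $g\in G$ and use $G=P\idp Q$ to write $g=xy$ uniquely with $x\in P$ and $y\in Q$. On one side, the superclass of $g$ in $\zzz$ is $g^A=\{(xy)^a:a\in A\}=\{x^ay^a:a\in A\}$, where the last equality uses that each $a$ is an automorphism. On the other side, by the definition of the direct product the superclass of $g$ in $\zzz_P\by\zzz_Q$ is $[x]_{\zzz_P}[y]_{\zzz_Q}=x^Ay^A=\{x^ay^b:a,b\in A\}$. The first set is visibly contained in the second, since taking $b=a$ recovers each element $x^ay^a$.

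Since this holds for every $g\in G$, each superclass of $\zzz$ is contained in a superclass of $\zzz_P\by\zzz_Q$; that is, $\zzz$ refines $\zzz_P\by\zzz_Q$, which is precisely $\zzz\pleq\zzz_P\by\zzz_Q$. The argument is short, and the only point requiring care is conceptual rather than computational: $A$ acts diagonally, so that the $\zzz$-orbit $\{x^ay^a\}$ sits as the diagonal inside the full product of orbits $\{x^ay^b\}$. This diagonal-versus-product distinction is exactly what yields containment in one direction while in general blocking equality, and it is what makes the inequality (rather than an identity) the correct statement.
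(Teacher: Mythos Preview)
Your argument is correct, but it proceeds differently from the paper's. You work directly with the action of $A\leq\Aut(G)$: writing $g=xy$ with $x\in P$, $y\in Q$, you observe that $[g]_\zzz=\{x^a y^a:a\in A\}$ sits as the diagonal inside $[x]_{\zzz_P}[y]_{\zzz_Q}=\{x^a y^b:a,b\in A\}$. The paper instead avoids unpacking the automorphism action and appeals to Lemma~\ref{superclassproducts}: since $\hat{[x]_\zzz}\,\hat{[y]_\zzz}$ is a nonnegative integer combination of superclass sums of $\zzz$ and contains $xy$ as a summand, the whole superclass $[xy]_\zzz$ must lie in $[x]_\zzz[y]_\zzz=[x]_{\zzz_P}[y]_{\zzz_Q}$. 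Your route is more elementary and makes the ``diagonal versus product'' picture vivid; the paper's route does not use the orbit description at all beyond knowing that $P$ and $Q$ are $\zzz$-normal, so it would apply verbatim to any $\zzz$ for which both $P$ and $Q$ happen to be $\zzz$-normal, not just those coming from automorphisms.
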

\begin{proof}
  Since $\zzz$ comes from automorphisms, both $P$ and $Q$ are $\zzz$-normal,
  so $\zzz_P$ and $\zzz_Q$ exist.
  Now let $x\in P$ and $y\in Q$, so that $xy$ is an arbitrary element of $G$.
  Then consider
  $$\hat{[x]_\zzz} \, \hat{[y]_\zzz} = \hat{[x]_\zzz [y]_\zzz},$$
  which is a linear combination of superclass sums of $\zzz$ by Lemma \ref{superclassproducts}.
  Since one summand is $xy$,
  it follows that
  $$[xy]_\zzz \sseq [x]_\zzz [y]_\zzz = [x]_{\zzz_P} [y]_{\zzz_Q} = [xy]_{\zzz_P\by\zzz_Q}.$$
  Thus $\zzz\leq\zzz_P\by\zzz_Q$.
\end{proof}

In fact, the direct product supercharacter theories themselves come from automorphisms,
as the following lemma shows.
Recall that all supercharacter theories of $P$ and $Q$ come from automorphisms.

\begin{lemma}\label{lem:dpfromauts}
  Let $\bbb\in\Sup(P)$ and $\ccc\in\Sup(Q)$.
  Let $R\leq\Aut(P)$ and $S\leq\Aut(Q)$
  such that $\bbb$ comes from $R$ and $\ccc$ comes from $S$.
  Then $\bbb\by\ccc$ comes from a subgroup of $\Aut(G)$ isomorphic to $R\by S$.
\end{lemma}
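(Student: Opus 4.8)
The plan is to build the required subgroup of $\Aut(G)$ directly from $R$ and $S$, exploiting the internal decomposition $G=P\idp Q$. Since every element of $G$ factors uniquely as $xy$ with $x\in P$ and $y\in Q$, each pair $(\rho,\sigma)\in R\by S$ determines a map $\rho\by\sigma\colon G\to G$ given by $(\rho\by\sigma)(xy)=\rho(x)\sigma(y)$. First I would verify that $\rho\by\sigma$ is an automorphism of $G$: it is a bijection because $\rho$ and $\sigma$ are, and it respects multiplication because $G$ is abelian and $\rho,\sigma$ are homomorphisms of $P$ and $Q$. Thus we obtain a map $\Phi\colon R\by S\to\Aut(G)$, $(\rho,\sigma)\mapsto\rho\by\sigma$.

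Next I would check that $\Phi$ is an injective homomorphism, so that its image $T:=\Phi(R\by S)$ is a subgroup of $\Aut(G)$ isomorphic to $R\by S$. It is a homomorphism since $(\rho_1\by\sigma_1)\circ(\rho_2\by\sigma_2)=(\rho_1\rho_2)\by(\sigma_1\sigma_2)$, which is a routine check on an arbitrary element $xy$. It is injective because $\rho\by\sigma$ restricts to $\rho$ on $P$ and to $\sigma$ on $Q$, so $\rho\by\sigma=\mathrm{id}_G$ forces $\rho=\mathrm{id}_P$ and $\sigma=\mathrm{id}_Q$. Hence $T\cong R\by S$.

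It remains to show that the supercharacter theory coming from $T$ is exactly $\bbb\by\ccc$. Since $G$ is abelian, the superclass of an element $g$ in the theory coming from $T$ is its orbit $g^T$. For $g=xy$ with $x\in P$ and $y\in Q$, I would compute $(xy)^T=\{\rho(x)\sigma(y):\rho\in R,\sigma\in S\}=x^R\,y^S$. Because $\bbb$ comes from $R$ and $\ccc$ comes from $S$, the orbits $x^R$ and $y^S$ are precisely $[x]_\bbb$ and $[y]_\ccc$; hence $(xy)^T=[x]_\bbb[y]_\ccc=[xy]_{\bbb\by\ccc}$ by the defining property of the direct product. The two theories thus have identical superclass partitions, and since each supercharacter theory is identified with its partition into superclasses, they coincide. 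The only real subtlety is the bookkeeping in confirming that $\rho\by\sigma$ is a well-defined automorphism and that $\Phi$ is an injective homomorphism; once the internal direct product structure $G=P\idp Q$ is in hand, every step is a direct verification, so I do not expect a serious obstacle.
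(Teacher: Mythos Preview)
Your proof is correct and follows essentially the same approach as the paper's. The only cosmetic difference is that the paper invokes the isomorphism $\Aut(G)\to\Aut(P)\by\Aut(Q)$ given by restriction $\phi\mapsto(\phi|_P,\phi|_Q)$ and then pulls back $R\by S$ to a subgroup $A\leq\Aut(G)$, whereas you construct the inverse map $\Phi\colon R\by S\to\Aut(G)$ explicitly; the orbit computation $(xy)^T=x^R y^S=[x]_\bbb[y]_\ccc=[xy]_{\bbb\by\ccc}$ is identical in both.
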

\begin{proof}
%  Recall that since $G$ is cyclic of order $pq$,
%  its automorphism group is isomorphic to the group of units of $\mathbb{Z}/pq\mathbb{Z}$.
%  The restriction map $\Aut(G) \to \Aut(P)$
%  corresponds to working modulo $p$,
%  and likewise for $Q$.
%  Thus the Chinese Remainder Theorem
%  implies that
%  the map
%  $$\begin{array}{rcl}
%      \Aut(G) &\longrightarrow& \Aut(P)\by\Aut(Q) \\
%      \phi    &\longmapsto&     \left( \phi|_P, \phi|_Q \right)
%    \end{array}$$
%  is a group isomorphism.
%  Let $A$ be the preimage of $R\by S$ under this map,
%  so that $A\cong R\by S$,
%  and let $\ddd\in\Sup(G)$ come from $A$.
%  Then $\ddd\leq \ddd_P\by\ddd_Q = \bbb\by\ccc$ by Lemma \ref{lem:dpinequality}.
  
  Since $G=P\idp Q$ and both $P$ and $Q$ are characteristic subgroups of $G$,
  the map
  $$\begin{array}{rcl}
      \Aut(G) &\longrightarrow& \Aut(P)\by\Aut(Q) \\
      \phi    &\longmapsto&     \left( \phi|_P, \phi|_Q \right)
    \end{array}$$
  is a group isomorphism,
  and indeed the action of $\Aut(P)\by\Aut(Q)$ on $P\by Q$ is isomorphic to the action of $\Aut(G)$ on $G$.
  Consider the subgroup $R\by S$ of $\Aut(P)\by\Aut(Q)$,
  and let $A$ be the corresponding subgroup of $\Aut(G)$.
  For all $x\in P$ and $y\in Q$, 
  the orbit of $(x,y)\in P\by Q$ under the action of $R\by S$ is $x^R \by y^S$,
  so the orbit of $xy\in G$ under the action of $A$ is
  $$\{x'y': x'\in x^R, y'\in y^S\} = x^R y^S = [x]_\bbb [y]_\ccc = [xy]_{\bbb\by\ccc}.$$
  Thus $\bbb\by\ccc$ comes from $A\cong R\by S$, as desired.
%  
%  On the other hand,\marginpar{\tiny\raggedright shorter?  `The orbits of $R\by S$ on $P\by Q$ are precisely the superclasses of $\ddd$'}
%  let $g\in G$ and write $g=xy$ for some $x\in P$ and $y\in Q$.
%  Let $g'\in [g]_{\bbb\by\ccc}$;
%  then $g'=x'y'$ for some $x'\in P$ and $y'\in Q$.
%  Now by definition, $[g]_{\bbb\by\ccc} = [x]_\bbb [y]_\ccc$,
%  so $x'\in [x]_\bbb$ and $y'\in [y]_\ccc$.
%  Thus there exist $\alpha\in R$ and $\beta\in S$ such that $x'=x^\alpha$ and $y'=y^\beta$.
%  By the definition of $A$, then, there exists $\gamma \in A$ such that $\gamma|_P = \alpha$ and $\gamma|_Q=\beta$; hence
%  $$g^\gamma = (xy)^\gamma = x^\alpha y^\beta = x'y' = g'.$$
%  Thus $g'\in [g]_\ddd$, implying that $\bbb\by\ccc\leq\ddd$.
%  
%  It follows that $\bbb\by\ccc$ equals $\ddd$ and comes from $A$, as desired.
\end{proof}

We need one more piece of notation.
The map $\dot{} : x \mapsto Px$ is an isomorphism from $Q$ to $G/P$,
which induces a lattice isomorphism \mbox{$\dot{~} : \Sup(Q) \to \Sup(G/P)$.}
The definition of $*$-product implies
that if $y\in Q$, $\bbb\in\Sup(P)$, and $\ccc\in\Sup(Q)$, then
$[y]_{\bbb*\dot{\ccc}} = P[y]_\ccc$;
note too that $(\bbb*\dot{\ccc})_P=\bbb$.

\begin{lemma}\label{lem:dprestrict}
  Let $\ddd\in\Sup(G)$ come from automorphisms.
  If $\bbb\in\Sup(P)$ and $\ccc\in\Sup(Q)$ such that $\ddd\leq\bbb*\dot{\ccc}$,
  then $\ddd_P\leq \bbb$ and $\ddd_Q\leq \ccc$.
\end{lemma}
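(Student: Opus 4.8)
The plan is to lean on two facts recorded just before the statement: that $P$ and $Q$, being the unique subgroups of their orders in the cyclic group $G$, are characteristic and hence $\ddd$-normal whenever $\ddd$ comes from automorphisms; and the explicit description of the $*$-product, namely $(\bbb*\dot{\ccc})_P=\bbb$ together with $[y]_{\bbb*\dot{\ccc}}=P[y]_\ccc$ for $y\in Q$. Because $\ddd$ comes from automorphisms and $P,Q$ are characteristic, each is a union of $\ddd$-superclasses, so $\ddd_P$ and $\ddd_Q$ are defined and $[x]_\ddd\sseq P$, $[y]_\ddd\sseq Q$ for $x\in P$ and $y\in Q$. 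Throughout I would use that $\ddd\pleq\bbb*\dot{\ccc}$ means every superclass of $\ddd$ is contained in a superclass of $\bbb*\dot{\ccc}$; applied to the block through a fixed $g$, this gives $[g]_\ddd\sseq[g]_{\bbb*\dot{\ccc}}$.

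For $\ddd_P\pleq\bbb$, I would take a superclass $K$ of $\ddd_P$, which is simply a superclass of $\ddd$ lying inside $P$. It is contained in some superclass $M$ of $\bbb*\dot{\ccc}$; since $K$ is a nonempty subset of $P$ and $P$ is $(\bbb*\dot{\ccc})$-normal, the block $M$ lies entirely within $P$, so $M$ is a superclass of $(\bbb*\dot{\ccc})_P=\bbb$. Thus every superclass of $\ddd_P$ is contained in a superclass of $\bbb$, which is exactly $\ddd_P\pleq\bbb$.

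For $\ddd_Q\pleq\ccc$, I would fix $y\in Q$ and aim to show $[y]_\ddd\sseq[y]_\ccc$. On one hand $[y]_\ddd\sseq Q$ by normality; on the other $[y]_\ddd\sseq[y]_{\bbb*\dot{\ccc}}=P[y]_\ccc$ (the case $y=1$ being trivial, since then $[y]_\ddd=\{1\}=[y]_\ccc$). Hence $[y]_\ddd\sseq Q\cap P[y]_\ccc$, and the heart of the argument is to identify this intersection. Since $G=P\idp Q$, every element factors uniquely into a $P$-part and a $Q$-part; an element $py'$ with $p\in P$ and $y'\in[y]_\ccc\sseq Q$ lies in $Q$ exactly when $p=1$, so $Q\cap P[y]_\ccc=[y]_\ccc$. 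Therefore $[y]_\ddd\sseq[y]_\ccc$, and since $[y]_{\ddd_Q}=[y]_\ddd$ this yields $\ddd_Q\pleq\ccc$.

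The computations are short, so I do not anticipate a genuine obstacle; the one point deserving care is the identity $Q\cap P[y]_\ccc=[y]_\ccc$, which is precisely where the direct-product structure $G=P\idp Q$ does the work. It is worth emphasizing that the $*$-product inflates each nontrivial coset by the whole of $P$, so that intersecting back down with $Q$ recovers $[y]_\ccc$ exactly rather than some coarsening of it; this is also the step that makes essential use of the hypothesis that $\ddd$ comes from automorphisms, which is what forces $Q$ to be $\ddd$-normal in the first place.
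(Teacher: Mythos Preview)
Your proof is correct and follows essentially the same route as the paper's own proof: use that $P$ and $Q$ are $\ddd$-normal to make sense of $\ddd_P$ and $\ddd_Q$, read off $\ddd_P\leq(\bbb*\dot{\ccc})_P=\bbb$ directly, and for $\ddd_Q\leq\ccc$ intersect $[y]_\ddd\sseq P[y]_\ccc$ with $Q$ using the internal direct product to recover $[y]_\ccc$. Your explicit handling of the case $y=1$ is a slight improvement in precision over the paper's version, but the argument is otherwise identical.
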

\begin{proof}
  Certainly $P$ and $Q$ are $\ddd$-normal since $\ddd$ comes from automorphisms,
  so $\ddd_P$ and $\ddd_Q$ exist;
  moreover, $\ddd_P \leq (\bbb*\dot{\ccc})_P = \bbb$.
  Now let $y\in Q$;
  then $[y]_\ddd \sseq [y]_{\bbb*\dot{\ccc}}$ and also $[y]_\ddd \sseq Q$,
  so 
  $$[y]_\ddd \sseq [y]_{\bbb*\dot{\ccc}}\cap Q = P[y]_\ccc \cap Q = [y]_\ccc.$$
  Thus $[y]_{\ddd_Q} = [y]_\ddd \sseq [y]_\ccc$,
  so $\ddd_Q \leq \ccc$.
\end{proof}

The following characterization of $\bbb\by\ccc$ will be used in 
Theorem \ref{lowersemimodularity}
to help prove that $\Sup(\Cyc{pq})$ is lower semimodular.
We shall use the fact that
%Note first that it is clear from the definition of the direct product that
%  if $\bbb_1,\bbb_2\in\Sup(P)$ and $\ccc_1,\ccc_2\in\Sup(Q)$,
%  then 
  $\bbb_1\by\ccc_1\leq \bbb_2\by\ccc_2$ if and only if both $\bbb_1\leq\bbb_2$ and $\ccc_1\leq\ccc_2$,
which is clear from the definition of the direct product.

\begin{lemma}\label{lem:dpunique}
  Let $G$ be cyclic of order $pq$, and let $P$ and $Q$ be the subgroups of orders $p$ and $q$, respectively.
  Let $\bbb\in\Sup(P)$ and $\ccc\in\Sup(Q)$.
  Then $\bbb\by\ccc\in\Sup(G)$ is the unique supercharacter theory coming from automorphisms that is covered by $\bbb*\dot{\ccc}$.
\end{lemma}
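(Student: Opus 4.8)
The plan is to verify three things: that $\bbb\by\ccc$ comes from automorphisms, that $\bbb\by\ccc\cov\bbb*\dot{\ccc}$, and that no other automorphism-induced theory is covered by $\bbb*\dot{\ccc}$. First I would record that $\bbb\by\ccc\pleq\bbb*\dot{\ccc}$ strictly: comparing superclasses, for $x\in P$ and $y\in Q$ the class $[xy]_{\bbb\by\ccc}=[x]_\bbb[y]_\ccc$ equals $[x]_\bbb$ when $y=1$ and lies in $P[y]_\ccc=[xy]_{\bbb*\dot{\ccc}}$ when $y\neq 1$, and the containment is strict because $P$ is nontrivial (so $[y]_\ccc\subsetneq P[y]_\ccc$ for $y\neq 1$). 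That $\bbb\by\ccc$ comes from automorphisms is precisely Lemma \ref{lem:dpfromauts}.

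The engine of both the uniqueness and the covering claims is the following observation, which I would isolate first: if $\eee\in\Sup(G)$ comes from automorphisms and $\eee\pleq\bbb*\dot{\ccc}$, then in fact $\eee\pleq\bbb\by\ccc$. This follows by combining Lemma \ref{lem:dprestrict} (which gives $\eee_P\pleq\bbb$ and $\eee_Q\pleq\ccc$), Lemma \ref{lem:dpinequality} (which gives $\eee\pleq\eee_P\by\eee_Q$), and the monotonicity of the direct product (so $\eee_P\by\eee_Q\pleq\bbb\by\ccc$). Uniqueness is then immediate: if $\ddd$ comes from automorphisms and $\ddd\cov\bbb*\dot{\ccc}$, then $\ddd\pleq\bbb\by\ccc<\bbb*\dot{\ccc}$, so the covering relation leaves no room for $\bbb\by\ccc$ strictly between $\ddd$ and $\bbb*\dot{\ccc}$, forcing $\ddd=\bbb\by\ccc$.

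The substantive step will be the covering relation itself. I would take any $\eee\in\Sup(G)$ with $\bbb\by\ccc\pleq\eee\pleq\bbb*\dot{\ccc}$ and $\eee\neq\bbb\by\ccc$, and force $\eee=\bbb*\dot{\ccc}$. Since $\bbb*\dot{\ccc}$ has $|\bbb|+|\ccc|-1\geq 3$ superclasses it is not maximal, so $\eee\neq\Max{G}$; and if $\eee$ came from automorphisms the observation above would give $\eee\pleq\bbb\by\ccc$, hence $\eee=\bbb\by\ccc$, a contradiction. By the $\Cyc{pq}$ special case of Theorem \ref{thm:classification}, the only remaining possibility is that $\eee$ is a nontrivial $*$-product, over $P$ or over $Q$. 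A $*$-product over $Q$ can be excluded directly: choosing $a\in P$ with $a\neq 1$, its $\eee$-superclass would be a union of $Q$-cosets and hence contain $ab\notin P$ for some $b\in Q$ with $b\neq 1$, whereas $\eee\pleq\bbb*\dot{\ccc}$ forces $[a]_\eee\sseq[a]_{\bbb*\dot{\ccc}}=[a]_\bbb\sseq P$. Therefore $\eee=\uuu*\dot{\www}$ is a $*$-product over $P$; Lemma \ref{lemprec} gives $\uuu\pleq\bbb$ and $\www\pleq\ccc$, restricting $\bbb\by\ccc\pleq\eee$ to $P$ gives $\bbb=(\bbb\by\ccc)_P\pleq\eee_P=\uuu$, and intersecting $[y]_\ccc=[y]_{\bbb\by\ccc}\sseq[y]_\eee=P[y]_\www$ with $Q$ gives $[y]_\ccc\sseq[y]_\www$ for every $y\in Q$. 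Hence $\uuu=\bbb$ and, together with $\www\pleq\ccc$, also $\www=\ccc$, so $\eee=\bbb*\dot{\ccc}$.

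The main obstacle is this covering step. In the partition lattice the two theories are far apart, differing by $(|\bbb|-1)(|\ccc|-1)$ blocks, so the covering relation is really an assertion that supercharacter theories are scarce in the interval, and I expect the Leung--Man classification to be unavoidable for enumerating the candidate intermediate theories. The delicate points will be the exclusion of $*$-products over $Q$ and the verification that a $*$-product over $P$ trapped in the interval must coincide with $\bbb*\dot{\ccc}$; everything else reduces to the restriction lemmas already available.
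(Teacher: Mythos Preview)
Your proposal is correct and follows essentially the same route as the paper: establish $\bbb\by\ccc<\bbb*\dot{\ccc}$ by inspecting superclasses, invoke the Leung--Man classification on an intermediate theory, dispatch the automorphism case via Lemmas~\ref{lem:dprestrict} and~\ref{lem:dpinequality}, and force a $*$-product in the interval to equal $\bbb*\dot{\ccc}$ by pinning down both factors. Your organization differs only cosmetically---you isolate the ``automorphism theory below $\bbb*\dot{\ccc}$ is already below $\bbb\by\ccc$'' observation up front and reuse it for both covering and uniqueness, and you argue the exclusion of $*$-products over $Q$ explicitly where the paper leaves it implicit in the fact that $P$ is $\zzz$-normal---but the substance is the same.
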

\begin{proof}
  Let $g\in G$ and write $g=xy$ for some $x\in P$ and $y\in Q$.
  If $g\in P$, then $[g]_{\bbb\by\ccc} = [g]_\bbb = [g]_{\bbb*\dot{\ccc}}$.
  If $g\not\in P$, however, then
  $$[g]_{\bbb\by\ccc} = [x]_\bbb[y]_\ccc \sseq P [y]_\ccc = [y]_{\bbb*\dot{\ccc}}.$$
  In either case, $[g]_{\bbb\by\ccc}$ is a subset of a superclass of $\bbb*\dot{\ccc}$,
  so $\bbb\by\ccc \leq \bbb*\dot{\ccc}$.
  
  To prove covering,
  suppose $\zzz\in\Sup(G)$ such that $\bbb\by\ccc < \zzz < \bbb*\dot{\ccc}$.
  Then $P$ is $\zzz$-normal and
  $(\bbb\by\ccc)_P \leq \zzz_P \leq (\bbb*\dot{\ccc})_P$, so $\bbb\leq\zzz_P\leq\bbb$ and thus $\zzz_P=\bbb$.
  Now by Theorem \ref{thm:classification}, either $\zzz$ comes from automorphisms or $\zzz$ is a $*$-product.
  In the former case, $\zzz_Q \leq \ccc$ by Lemma \ref{lem:dprestrict},
  but we also have $\ccc = (\bbb\by\ccc)_Q \leq \zzz_Q$,
  so $\zzz_Q=\ccc$.
  Then $\zzz\leq \zzz_P\by \zzz_Q = \bbb\by\ccc$, a contradiction.\marginpar{\tiny\raggedright Uses L\ref{lem:dpinequality}}
  
  If $\zzz$ is a $*$-product, on the other hand, 
  write $\zzz=\eee*\dot{\fff}$ 
  for some $\eee\in\Sup(P)$ and $\fff\in\Sup(Q)$,
  so that $\bbb\by\ccc \leq \eee*\dot{\fff} \leq \bbb*\dot{\ccc}$.
  Then $\dot{\fff}\leq\dot{\ccc}$ by Lemma \ref{lemClosure},
  so $\fff\leq\ccc$,
  while Lemma \ref{lem:dprestrict} implies that
  $\ccc=(\bbb\by\ccc)_Q \leq \fff$.\marginpar{\tiny\raggedright Uses L\ref{lem:dpfromauts}}
  Thus $\fff=\ccc$ and $\eee=\zzz_P=\bbb$,
  so $\zzz=\eee*\dot{\fff} = \bbb*\dot{\ccc}$,
  another contradiction.
  Hence $\bbb\by\ccc \cov \bbb*\dot{\ccc}$.

  Finally, to prove uniqueness suppose $\zzz\in\Sup(G)$ comes from automorphisms
  such that $\zzz\cov \bbb*\dot{\ccc}$.
  Then $\zzz_P\leq \bbb$ and $\zzz_Q\leq \ccc$ by Lemma \ref{lem:dprestrict},
  so $\zzz \leq \zzz_P\by\zzz_Q \leq \bbb\by\ccc\cov\bbb*\dot{\ccc}$ by Lemma \ref{lem:dpinequality}.
  Thus $\zzz=\bbb\by\ccc$, as desired.
\end{proof}

Note that the subset\marginpar{\tiny\raggedright \emph{Is} it a sublattice?}
$\Sup(P)\by\Sup(Q)$ is not in general convex in $\Sup(G)$.
For example, 
suppose $pq$ is odd and
let $\xinv{G}\in\Sup(\Cyc{pq})$ be the supercharacter theory 
coming from $\gen{\sigma}\leq\Aut(\Cyc{pq})$, where $\sigma$ is the inversion automorphism.
Then $\Min{P}\by \Min{Q} < \xinv{G} < \Max{P}\by\Max{Q}$,
but $\xinv{G}$ is not a direct product.
Nevertheless, the following lemma shows that covering relations in this subset do hold in the full lattice as well.
\begin{lemma}\label{lem:dpcovering}
  Let $G$ be cyclic of order $pq$, and let $P$ and $Q$ be the subgroups of orders $p$ and $q$, respectively.
  Let $\bbb_1,\bbb_2\in\Sup(P)$ and $\ccc_1,\ccc_2\in\Sup(Q)$.
  Suppose either that $\bbb_1=\bbb_2$ and $\ccc_1\cov\ccc_2$, 
  or else that $\bbb_1\cov\bbb_2$ and $\ccc_1=\ccc_2$.
  Then $\bbb_1\by\ccc_1\cov\bbb_2\by\ccc_2$ in the lattice $\Sup(G)$.
\end{lemma}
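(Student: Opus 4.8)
The plan is to reduce everything to the subgroup lattice of $\Aut(G)$, where covering relations are transparent, after ruling out the possibility that some intermediate supercharacter theory fails to come from automorphisms. By the symmetry between $P$ and $Q$, I may assume without loss of generality that $\bbb_1=\bbb_2=\bbb$ and $\ccc_1\cov\ccc_2$; writing $\ccc=\ccc_1$, I must show $\bbb\by\ccc\cov\bbb\by\ccc_2$. That $\bbb\by\ccc<\bbb\by\ccc_2$ strictly is immediate from the characterization $\bbb_1\by\ccc_1\leq\bbb_2\by\ccc_2 \iff \bbb_1\leq\bbb_2 \text{ and } \ccc_1\leq\ccc_2$ recorded before Lemma \ref{lem:dpunique}, together with $\ccc<\ccc_2$. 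I then suppose toward a contradiction that some $\zzz\in\Sup(G)$ satisfies $\bbb\by\ccc<\zzz<\bbb\by\ccc_2$.

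First I would extract structural constraints on $\zzz$. Since $\bbb\by\ccc_2\leq\Max{P}\by\Max{Q}<\Max{G}$, the theory $\zzz$ is non-maximal. Moreover both $P$ and $Q$ are unions of superclasses of $\bbb\by\ccc_2$ (as $[x]_{\bbb\by\ccc_2}=[x]_\bbb\sseq P$ for $x\in P$ and $[y]_{\bbb\by\ccc_2}=[y]_{\ccc_2}\sseq Q$ for $y\in Q$), and because $\zzz$ refines $\bbb\by\ccc_2$, each superclass of $\bbb\by\ccc_2$ is a union of superclasses of $\zzz$; hence both $P$ and $Q$ are $\zzz$-normal. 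Now the special case of Theorem \ref{thm:classification} for order $pq$ forces $\zzz$ either to come from automorphisms or to be a nontrivial $*$-product over $P$ or over $Q$. I can eliminate the $*$-product case using the normality just obtained: in any $*$-product over $P$, the superclass of a nonidentity $y\in Q$ is a union of $P$-cosets and so contains $Py\not\sseq Q$, whence $Q$ fails to be $\zzz$-normal; symmetrically, a $*$-product over $Q$ would make $P$ fail to be $\zzz$-normal. Since both $P$ and $Q$ are $\zzz$-normal, $\zzz$ must come from automorphisms.

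Finally I would finish inside the subgroup lattice. Writing $\bbb$, $\ccc$, and $\ccc_2$ as coming from subgroups $R\leq\Aut(P)$ and $S_1\leq S\leq\Aut(Q)$, Lemma \ref{Aut(G)} applied to $Q$ gives $S_1\cov S$, and Lemma \ref{lem:dpfromauts} identifies $\bbb\by\ccc$ and $\bbb\by\ccc_2$ with the theories coming from the subgroups of $\Aut(G)\cong\Aut(P)\by\Aut(Q)$ corresponding to $R\by S_1$ and $R\by S$. Under the lattice isomorphism $\alpha$ of Lemma \ref{Aut(G)}, the interval between $\bbb\by\ccc$ and $\bbb\by\ccc_2$ among theories coming from automorphisms corresponds to the interval $[R\by S_1,\,R\by S]$ in the subgroup lattice of $\Aut(G)$, which is isomorphic to the subgroup lattice of $(R\by S)/(R\by S_1)\cong S/S_1$. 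Because $S_1\cov S$, the quotient $S/S_1$ is cyclic of prime order, so this interval contains only its two endpoints; thus $\zzz$, which comes from automorphisms and lies strictly between $\bbb\by\ccc$ and $\bbb\by\ccc_2$, cannot exist, a contradiction. Hence $\bbb\by\ccc\cov\bbb\by\ccc_2$.

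I expect the main obstacle to be precisely the step that rules out intermediate $*$-products. The subset $\Sup(P)\by\Sup(Q)$ is not convex in $\Sup(G)$, so a priori a theory strictly between two direct products need not itself be a direct product, and one cannot simply transport covering from the sublattice $L$ of automorphism theories to the full lattice. The normality argument, powered by the fact that $\zzz$ refines $\bbb\by\ccc_2$ and so inherits $P$- and $Q$-normality, is what closes this gap; once $\zzz$ is known to come from automorphisms, the conclusion is a routine consequence of the transparency of covering in the subgroup lattice of the abelian group $\Aut(G)$.
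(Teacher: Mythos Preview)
Your proof is correct and follows essentially the same approach as the paper's: reduce to one symmetric case, show any intermediate $\zzz$ inherits both $P$- and $Q$-normality from the upper direct product and hence cannot be a nontrivial $*$-product, then use the classification and the lattice isomorphism of Lemma~\ref{Aut(G)} together with Lemma~\ref{lem:dpfromauts} to conclude that no subgroup of $\Aut(G)$ lies strictly between those corresponding to the two direct products (prime index). The only differences are cosmetic---you spell out the $*$-product exclusion more explicitly and phrase the prime-index step via the quotient $S/S_1$ rather than via orders---but the argument is the same.
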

\begin{proof}
  We prove the case when $\bbb_1\cov\bbb_2$ and $\ccc_1=\ccc_2$;
  the other case follows by symmetry.
  Let $R_1,R_2\leq\Aut(P)$ and $S\leq\Aut(Q)$
  such that $\bbb_1$ comes from $R_1$, 
            $\bbb_2$ comes from $R_2$, 
            and $\ccc_1=\ccc_2$ comes from $S$.
  Now by Lemma \ref{lem:dpfromauts},
  for $i=1,2$ the direct product $\bbb_i\by\ccc_i$ comes from a subgroup of $\Aut(G)$ isomorphic to $R_i\by S_i$.
  By the lattice isomorphism of Lemma \ref{Aut(G)},
  we know that $R_1$ is a maximal subgroup of $R_2$.
  Since $\Aut(P)$ is abelian, this means that
  %$$\frac{|R_2\by S|}{|R_1\by S|} = \frac{|R_2|}{|R_1|}$$
  $|R_2:R_1|$
  is prime.
  %so $R_1\by S$ is a maximal subgroup of $R_2\by S$.

  Now suppose $\zzz\in\Sup(G)$ such that $\bbb_1\by\ccc_1 < \zzz < \bbb_2\by\ccc_2$.
  Then both $P$ and $Q$ are $\zzz$-normal, implying that $\zzz$ is not a $*$-product,
  so by Theorem \ref{thm:classification} it must come from automorphisms.
  By Lemmas \ref{lem:dpfromauts} and \ref{Aut(G)}, however,
  $\zzz$ would come from a group of automorphisms of an order strictly between $|R_1||S|$ and $|R_2||S|$,
  contradicting LaGrange's Theorem.
  Thus $\bbb_1\by\ccc_1 \cov \bbb_2\by\ccc_2$.
\end{proof}

\section{Upper Semimodularity}\label{sect_upper}

A lattice element is called an \defnstyle{atom} if it covers the minimal element of the lattice;
a lattice element covered by the maximal element of the lattice is likewise called a \defnstyle{coatom}.
%A lattice element is called a \defnstyle{coatom} if it is covered by the maximal element of the lattice;
%a lattice element covering the minimal element of the lattice is likewise called an \defnstyle{atom}.
Recall that if $\ccc$ is a supercharacter theory, $|\ccc|$ denotes how many superclasses it has.
The coatoms of some supercharacter theory lattices are easy to identify.

\begin{lemma}\label{lemMMCoatoms}
Let $G$ be a cyclic group of nonprime order $n$. 
Then $\xxx\in\Sup(G)$ is a coatom
if and only if 
$\xxx=\Max{N}*\Max{G/N}$
for some proper nontrivial subgroup $N\leq G$.
\end{lemma}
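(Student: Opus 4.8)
The plan is to prove the two implications separately, with the forward (``if'') direction a direct computation and the converse resting on the Leung--Man classification, Theorem \ref{thm:classification}. For the easy direction, let $N$ be a proper nontrivial subgroup of $G$. Unwinding the definition of the $*$-product shows that $\Max{N}*\Max{G/N}$ has exactly the three superclasses $\{1\}$, $N-\{1\}$, and $G-N$, all nonempty since $N$ is proper and nontrivial. Any supercharacter theory strictly above it coarsens this partition while keeping $\{1\}$ as a block, by condition (3) of the definition; the only such coarsening merges $N-\{1\}$ with $G-N$ to give $\Max{G}$. Hence $\Max{N}*\Max{G/N}\cov\Max{G}$, so it is a coatom.

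For the converse the crux is a single observation: if $N$ is any proper nontrivial $\xxx$-normal subgroup, then $\xxx\pleq\Max{N}*\Max{G/N}<\Max{G}$. Indeed, since $N$ is a union of $\xxx$-superclasses and $\{1\}$ is always a superclass, each of $\{1\}$, $N-\{1\}$, and $G-N$ is a union of $\xxx$-superclasses, so every $\xxx$-superclass sits inside one block of $\Max{N}*\Max{G/N}$; the strict inequality with $\Max{G}$ is the computation from the first paragraph. Granting this, if $\xxx$ is a coatom for which such an $N$ exists, then $\xxx\pleq\Max{N}*\Max{G/N}<\Max{G}$ together with $\xxx\cov\Max{G}$ forces $\xxx=\Max{N}*\Max{G/N}$, exactly as desired.

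It therefore remains to produce a proper nontrivial $\xxx$-normal subgroup for an arbitrary coatom $\xxx$. Since a coatom is non-maximal, Theorem \ref{thm:classification} gives three cases. If $\xxx$ is a nontrivial direct product or a nontrivial $\triangle$-product, then it is built over a proper nontrivial subgroup $N$ (a direct factor, or the normal subgroup underlying the product) that is a union of superclasses and hence $\xxx$-normal, and we are finished. The remaining case, which I expect to be the main obstacle, is when $\xxx$ comes from automorphisms: here the superclasses are the orbits of some $H\leq\Aut(G)$, and I must exhibit an $H$-invariant proper nontrivial subgroup. The resolution uses that $G$ is cyclic, so it has a unique subgroup of each order and every subgroup is therefore characteristic; since $n$ is nonprime, $G$ has a proper nontrivial subgroup $N$, which is automatically invariant under all of $\Aut(G)$, hence a union of $H$-orbits and so $\xxx$-normal. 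This supplies $N$ in every case and completes the proof.
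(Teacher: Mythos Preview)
Your proof is correct and follows essentially the same route as the paper: the forward direction counts superclasses to see that $\Max{N}*\Max{G/N}$ is a coatom, and the converse invokes Theorem~\ref{thm:classification} to exhibit a proper nontrivial $\xxx$-normal subgroup $N$, yielding $\xxx\pleq\Max{N}*\Max{G/N}\cov\Max{G}$ and hence equality. Your treatment is somewhat more explicit than the paper's---in particular, you spell out why the automorphism case produces an $\xxx$-normal subgroup via the fact that subgroups of cyclic groups are characteristic---but the argument is the same.
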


\begin{proof}
The supercharacter theory $\Max{N}*\Max{G/N}$ is clearly a coatom
for each $N\leq G$, since $|\Max{N}*\Max{G/N}|=3$ and $|\Max{G}|=2$.
Conversely,
let $\xxx\in\Sup(G)$ be a coatom.
By Theorem \ref{thm:classification},
either $\xxx$ is a direct product, $\xxx$ is a $\triangle$-product, or $\xxx$ comes from automorphisms.
In each case, there exists at least one proper nontrivial subgroup $N$ of $G$ that is $\xxx$-normal,
so $\xxx\pleq \Max{N}*\Max{G/N} \cov \Max{G}$;
since $\xxx$ is a coatom, it follows that $\xxx=\Max{N}*\Max{G/N}$, as desired.
\end{proof}

We next consider the meet of all these coatoms.
Recall that $\A{G}$ denotes the supercharacter theory of a group $G$
that comes from the full automorphism group $\Aut(G)$.

\begin{lemma}\label{coatommeets}
  Let $G$ be a cyclic group of nonprime order $n$. 
  Then 
  $$\A{G} = \bigwedge_{1<N<G} \Max{N}*\Max{G/N}.$$
\end{lemma}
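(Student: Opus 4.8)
The plan is to establish the two inequalities $\A{G}\pleq \mathcal M$ and $\mathcal M\pleq\A{G}$ separately, where $\mathcal M=\bigwedge_{1<N<G}\Max{N}*\Max{G/N}$. The starting point is to identify the relevant superclasses. Because $G$ is abelian, the superclasses of $\A{G}$ are the orbits of $\Aut(G)$ on $G$, which for a cyclic group are exactly the sets $O_d=\{g\in G: g\text{ has order }d\}$, one for each divisor $d$ of $n$. On the other hand, the definition of the $*$-product shows that $\Max{N}*\Max{G/N}$ has precisely the three superclasses $\{1\}$, $N\setminus\{1\}$, and $G\setminus N$. Since $G$ is cyclic, $N$ is the unique subgroup of its order, so an element of order $d$ lies in $N$ exactly when $d\mid|N|$; consequently each orbit $O_d$ is contained in a single superclass of $\Max{N}*\Max{G/N}$.

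For the inequality $\A{G}\pleq\mathcal M$, I would show that $\A{G}$ is a lower bound of the whole family of coatoms. Fixing a proper nontrivial $N$, the observation above gives $O_1=\{1\}$, while each $O_d$ with $d>1$ lies in $N\setminus\{1\}$ if $d\mid|N|$ and in $G\setminus N$ otherwise; hence $\A{G}$ refines $\Max{N}*\Max{G/N}$, i.e.\ $\A{G}\pleq\Max{N}*\Max{G/N}$. As this holds for every proper nontrivial $N$, the supercharacter theory $\A{G}$ is a lower bound for the family, and therefore $\A{G}\pleq\mathcal M$ by the definition of the meet.

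For the reverse inequality, I would show directly that $\mathcal M$ refines $\A{G}$. Suppose $g$ and $h$ share a superclass of $\mathcal M$. Since $\mathcal M\pleq\Max{N}*\Max{G/N}$ for every $N$, the elements $g$ and $h$ must share a superclass of each coatom. If $g=1$ then $h=1$ as well, because $\{1\}$ is a superclass of every coatom. Otherwise $g$ and $h$ have orders $d,e>1$ dividing $n$, and sharing a superclass of $\Max{N}*\Max{G/N}$ forces $d\mid|N|\iff e\mid|N|$ for every proper nontrivial $N$. Letting $N$ run over the subgroups whose orders are the proper nontrivial divisors of $n$ and testing $|N|=d$ and $|N|=e$ then forces $d=e$, with the case of a generator ($d=n$ or $e=n$) handled separately since generators lie in $G\setminus N$ for every proper $N$. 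Thus $g$ and $h$ have equal order, so they lie in the same orbit of $\Aut(G)$ and hence in a common superclass of $\A{G}$. This gives $\mathcal M\pleq\A{G}$, and combining the two inequalities yields $\mathcal M=\A{G}$.

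The hard part will be the short number-theoretic step inside the reverse inequality: checking that coincidence of superclasses across all coatoms forces equality of element orders, which amounts to the fact that the proper nontrivial divisors of $n$ suffice to distinguish any two distinct orders dividing $n$. The only subtlety to watch is that the meet $\mathcal M$ taken in $\Sup(G)$ need not agree with the set-partition meet; fortunately the argument above only uses that $\mathcal M$, viewed as a partition, refines each coatom, which is immediate from $\mathcal M\pleq\Max{N}*\Max{G/N}$.
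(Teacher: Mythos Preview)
Your proposal is correct and follows essentially the same approach as the paper: both arguments use that the superclasses of $\A{G}$ are the order-classes $O_d$ to show $\A{G}$ is a lower bound, and then separate any two elements of different orders by an appropriately chosen coatom to obtain the reverse inequality. The paper's version of the reverse step is slightly slicker---given $o(x)<o(y)$ it simply takes $N=\langle x\rangle$, which avoids your case split for generators---but the content is the same.
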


\begin{proof}
  Because $G$ is cyclic,
  all elements of a given order lie in the same orbit under the action of $\Aut(G)$,
  so the superclasses of $\A{G}$ partition the elements of $G$ according to their orders.
  Thus $\A{G}\pleq \Max{N}*\Max{G/N}$ for each proper nontrivial subgroup $N$,
  so $\A{G}\pleq \bigwedge_{1<N<G} \Max{N}*\Max{G/N}$.

  Now consider two elements $x,y\in G$ of different orders $o(x)<o(y)$.
  Then $y\not\in\gen{x}$,
  so $x$ and $y$ lie in different superclasses of $\Max{\gen{x}}*\Max{G/{\gen{x}}}$;
  thus they lie in different superclasses of $\bigwedge_{1<N<G} \Max{N}*\Max{G/N}$.
  Hence every superclass of that meet contains elements of a single order,
  so $\bigwedge_{1<N<G}\Max{N}*\Max{G/N} \pleq \A{G}$.
\end{proof}

The goal of this section is to 
identify the positive integers $n$ for which the lattice $\Sup(\Cyc{n})$ is upper semimodular.
To that end, we address two specific cases: 
the cyclic groups of order $pq$ 
and the cyclic groups of order $p^2$.
For the $pq$ case 
we shall show that $\Min{P}*\Min{G/P}$ and $\Min{Q}*\Min{G/Q}$
violate the upper semimodularity criterion.

\begin{lemma}\label{lemcoprime}
  Let $G$ be a cyclic group of order $pq$, where $p$ and $q$ are distinct primes,
  and let $P$ and $Q$ be the subgroups of orders $p$ and $q$, respectively. 
  Let $\xxx\in \Sup(P)$, $\yyy\in \Sup(G/P)$, $\uuu\in \Sup(Q)$, and $\vvv\in \Sup(G/Q)$.
  Then $\xxx*\yyy$ is incomparable to $\uuu*\vvv$; moreover, $(\xxx*\yyy) \join (\uuu*\vvv) = \Max{G}$.
\end{lemma}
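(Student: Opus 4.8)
The plan is to exploit the unique factorization $G = P \idp Q$, so that every element of $G$ is written uniquely as $xy$ with $x \in P$ and $y \in Q$; such an element has order $p$ iff $y = 1 \neq x$, order $q$ iff $x = 1 \neq y$, and order $pq$ iff $x, y \neq 1$. The structural fact I would rely on is this: since $\xxx*\yyy$ is a $*$-product over $P$, every superclass of $\xxx*\yyy$ meeting $G \setminus P$ is a union of entire $P$-cosets, so for each $y \in Q \setminus \{1\}$ the whole coset $Py = \{y\} \cup \{xy : x \in P \setminus \{1\}\}$ lies in a single superclass of $\xxx*\yyy$. Symmetrically, because $\uuu*\vvv$ is a $*$-product over $Q$, for each $x \in P \setminus \{1\}$ the coset $Qx = \{x\} \cup \{xy : y \in Q \setminus \{1\}\}$ lies in a single superclass of $\uuu*\vvv$. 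Finally, the superclass of any element of $P$ in $\xxx*\yyy$ is a subset of $P$, and the superclass of any element of $Q$ in $\uuu*\vvv$ is a subset of $Q$.

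For incomparability, first I would show $\xxx*\yyy \not\pleq \uuu*\vvv$. Fix $y \in Q\setminus\{1\}$ and $x \in P \setminus \{1\}$. Since $Py$ lies in one superclass of $\xxx*\yyy$, the elements $y$ and $xy$ share a superclass there; but in $\uuu*\vvv$ the element $y$ lies in $Q$ and hence in a superclass contained in $Q$, while $xy \notin Q$, so $y$ and $xy$ lie in different superclasses of $\uuu*\vvv$. Thus some superclass of $\xxx*\yyy$ is contained in no superclass of $\uuu*\vvv$, giving $\xxx*\yyy \not\pleq \uuu*\vvv$. The reverse noninequality $\uuu*\vvv \not\pleq \xxx*\yyy$ follows by the symmetric argument with the roles of $P, Q$ (and of $x, y$) exchanged.

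For the join, I would use the fact recorded earlier in the excerpt that the join in $\Sup(G)$ coincides with the join of set-partitions, whose blocks are the classes of the transitive closure of the relation ``lie in a common superclass of $\xxx*\yyy$ or of $\uuu*\vvv$.'' Since $\{1\}$ is a superclass in every supercharacter theory, $\{1\}$ is its own block of the join, and it remains to show the $pq-1$ nonidentity elements fall into a single block. Here the picture is a grid: the $(p-1)(q-1)$ elements of order $pq$ form a $(p-1)\times(q-1)$ array indexed by their $P$- and $Q$-parts; each ``column'' $\{xy : x \in P \setminus \{1\}\}$ is joined, together with $y$, by $\xxx*\yyy$, and each ``row'' $\{xy : y \in Q \setminus \{1\}\}$ is joined, together with $x$, by $\uuu*\vvv$. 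Because $p, q \geq 2$ the array is nonempty, and a grid in which every row and every column is connected is itself connected, with the order-$p$ and order-$q$ elements attached through their row or column. Hence every nonidentity element lies in one block, and the join is $\Max{G}$.

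The main obstacle is bookkeeping in the connectivity step: I must ensure the row and column blocks genuinely overlap (they meet precisely in the order-$pq$ elements) and that the axis elements of orders $p$ and $q$ are correctly absorbed rather than left isolated. Concretely, I would verify the chain $x \sim xy \sim y$, using $\uuu*\vvv$ for the first link and $\xxx*\yyy$ for the second, to connect an arbitrary order-$p$ element to an arbitrary order-$q$ element; then connect any two order-$p$ elements through a common order-$q$ element; and finally absorb each order-$pq$ element into its row. Once this chain is laid out explicitly, the conclusion is immediate.
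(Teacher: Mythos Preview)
Your argument is correct, but it takes a genuinely different route from the paper's. The paper first observes, just as you do, that a superclass of $\xxx*\yyy$ containing a nontrivial $P$-coset mixes an element of order $q$ with elements of order $pq$; but rather than comparing with $\uuu*\vvv$ directly, it uses this to conclude $\xxx*\yyy\not\pleq\Max{Q}*\Max{G/Q}$, and symmetrically $\uuu*\vvv\not\pleq\Max{P}*\Max{G/P}$. Since by Lemma~\ref{lemMMCoatoms} these two $*$-products are the only coatoms of $\Sup(G)$, the join $(\xxx*\yyy)\join(\uuu*\vvv)$ lies below neither coatom and hence equals $\Max{G}$; incomparability then drops out as a consequence. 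So the paper reverses your order of deduction and replaces the explicit connectivity computation by an appeal to the coatom classification. Your approach is more self-contained---it does not need Lemma~\ref{lemMMCoatoms} and instead computes the join of set-partitions by hand---while the paper's is shorter and exhibits the structural reason (there simply are no coatoms left for the join to sit under).
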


\begin{proof}
Some part of $\xxx*\yyy$ contains a nontrivial $P$-coset,
which contains both an element of order $q$ and elements of order $pq$.
Then because one part of $\Max{Q}*\Max{G/Q}$ comprises all the elements of order $q$,
it follows that $\xxx*\yyy\not\pleq \Max{Q}*\Max{G/Q}$;
likewise $\uuu*\vvv\not\pleq \Max{P}*\Max{G/P}$.
Therefore the join $(\xxx*\yyy)\join(\uuu*\vvv)$ is finer than neither coatom of $\Sup(G)$, 
\marginpar{\tiny\raggedright Uses Lemma \ref{lemMMCoatoms}}
so it can only be $\Max{G}$.
In particular, $\xxx*\yyy$ and $\uuu*\vvv$ are incomparable.
\end{proof}

\begin{lemma}\label{lemwrapup}
Let $G$ be a cyclic group of order $pq$, where $p$ and $q$ are distinct primes. 
Let $P$ and $Q$ be the subgroups of orders $p$ and $q$, respectively. 
Then $\Min{P}*\Min{G/P}$ and $\Min{Q}*\Min{G/Q}$ are atoms. 
Furthermore, neither $\Min{P}*\Min{G/P}$ nor $\Min{Q}*\Min{G/Q}$ is comparable with $\A{G}$.
\marginpar{\tiny\raggedright The last sentence \emph{is} needed later.}
\end{lemma}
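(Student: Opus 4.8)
The plan is to treat the atom claim and the incomparability claim separately, and in each to argue only for $\Min{P}*\Min{G/P}$, since the statement for $\Min{Q}*\Min{G/Q}$ follows by interchanging the roles of $P$ and $Q$. For the atom claim the key remark is that $\Min{P}\by\Min{Q}=\Min{G}$: as $P$ and $Q$ are abelian, the direct product of their minimal theories has every superclass $[xy]=\{x\}\{y\}=\{xy\}$, so it is the all-singletons theory $\Min{G}$. Since the lattice isomorphism $\dot{~}:\Sup(Q)\to\Sup(G/P)$ carries minima to minima, we have $\dot{\Min{Q}}=\Min{G/P}$, so applying Lemma \ref{lem:dpunique} with $\bbb=\Min{P}$ and $\ccc=\Min{Q}$ gives at once $\Min{G}=\Min{P}\by\Min{Q}\cov\Min{P}*\Min{G/P}$. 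Thus $\Min{P}*\Min{G/P}$ covers the minimum of $\Sup(G)$ and is an atom, and the analogue of Lemma \ref{lem:dpunique} with $P$ and $Q$ reversed gives $\Min{G}\cov\Min{Q}*\Min{G/Q}$ as well.

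For incomparability I would exploit the description, recorded in the proof of Lemma \ref{coatommeets}, that the superclasses of $\A{G}$ are precisely the sets of elements of a common order. To rule out $\Min{P}*\Min{G/P}\pleq\A{G}$, note that any nontrivial $P$-coset is a single superclass of $\Min{P}*\Min{G/P}$, yet it contains an element of order $q$ and, since $p\geq2$, an element of order $pq$; hence it is not contained in any one order class. To rule out the reverse inequality $\A{G}\pleq\Min{P}*\Min{G/P}$, I would observe that this would force both the class $O_p=P\setminus\{1\}$ of order-$p$ elements and the class $O_q=Q\setminus\{1\}$ of order-$q$ elements to lie inside single superclasses of $\Min{P}*\Min{G/P}$. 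But the elements of $O_p$ are singletons of $\Min{P}*\Min{G/P}$, while the elements of $O_q$ lie in pairwise distinct nontrivial $P$-cosets (since $P\cap Q=\{1\}$); confining $O_p$ and $O_q$ to single superclasses therefore forces $p\leq2$ and $q\leq2$ simultaneously, which is impossible for distinct primes. Together these two facts establish incomparability.

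The argument is largely bookkeeping once the orders of the coset elements are tracked, so I do not anticipate a serious obstacle. The one point requiring care is the asymmetry of $P$ and $Q$ inside $\Min{P}*\Min{G/P}$ --- one factor is shattered into singletons while the other survives as whole cosets --- which is exactly why phrasing the reverse inequality through the simultaneous constraints $p\leq2$ and $q\leq2$ is preferable to splitting into the cases $p\geq3$ and $q\geq3$.
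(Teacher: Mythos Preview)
Your proof is correct and follows the paper's approach almost exactly: the atom claim via $\Min{G}=\Min{P}\by\Min{Q}$ and Lemma~\ref{lem:dpunique} is identical, as is the argument that a nontrivial $P$-coset mixes orders and hence $\Min{P}*\Min{G/P}\not\pleq\A{G}$.

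The only divergence is in ruling out $\A{G}\pleq\Min{P}*\Min{G/P}$. The paper dispatches this in a phrase: since $\Min{P}*\Min{G/P}$ has just been shown to be an atom, anything below it is either $\Min{G}$ or itself, and $\A{G}$ is neither (it is not $\Min{G}$ because $\Aut(G)$ acts nontrivially, and it is not $\Min{P}*\Min{G/P}$ by the first direction). Your direct argument via the simultaneous constraints $p\leq2$ and $q\leq2$ is valid but unnecessary once the atom property is in hand.
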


\begin{proof}
By the definition of the direct product,
$\Min{G} = \Min{P}\by\Min{Q}$,
so $\Min{P}*\Min{G/P}$ is an atom
by Lemma \ref{lem:dpunique}.
Now some superclass of $\Min{P}*\Min{G/P}$ contains a nontrivial $P$-coset, 
and hence includes elements both of order $q$ and of order $pq$, 
whereas all the elements in each superclass of $\A{G}$ have the same order.
Thus $\Min{P}*\Min{G/P} \not \pleq \A{G}$.
On the other hand, $\A{G}\not\pleq \Min{P}*\Min{G/P}$ since the latter is an atom, so $\Min{P}*\Min{G/P}$ and $\A{G}$ are incomparable.
By symmetry, $\Min{Q}*\Min{G/Q}$ is also an atom and incomparable with $\A{G}$.
\end{proof}

We pause to note a corollary that will be useful in Section \ref{sect_lower}.
In the lattice $\Sup(G)$, if $\xxx\leq\yyy$ let us say that $\yyy$ \defnstyle{lies above} $\xxx$;
likewise let us say $\yyy$ \defnstyle{lies between} $\xxx$ and $\zzz$ if $\xxx\leq\yyy\leq\zzz$.

\begin{corollary} \label{corwrapup}
  Let $G$ be a cyclic group of order $pq$, where $p$ and $q$ are distinct primes. 
  %Let $P$ and $Q$ be the subgroups of order $p$ and $q$ respectively. 
  Then in $\Sup(G)$,
  \renewcommand\theenumi{\alph{enumi}}
  \begin{enumerate}
    \item No supercharacter theory that comes from automorphisms lies above a nontrivial $*$-product. \label{noautabovestar}
    \item No nontrivial $*$-product comes from automorphisms. \label{noautisstar}
    \item No nontrivial $*$-product lies between two supercharacter theories from automorphisms, 
          and no supercharacter theory from automorphisms lies between two nontrivial $*$-products.\label{nostarbetween}
  \end{enumerate}
\end{corollary}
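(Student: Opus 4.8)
The plan is to reduce all three parts to a single observation about element orders, exactly the device already used in Lemmas \ref{coatommeets}, \ref{lemcoprime}, and \ref{lemwrapup}: for each superclass, count how many distinct orders its elements have. First I would record two facts. If $\ddd\in\Sup(G)$ comes from automorphisms, then each superclass of $\ddd$ consists of elements of a single order, since the superclasses are orbits under a subgroup of $\Aut(G)$ and automorphisms preserve order; equivalently, $\ddd\pleq\A{G}$ and the superclasses of $\A{G}$ are the order classes of $G$, as in the proof of Lemma \ref{coatommeets}. If instead $\eee$ is a nontrivial $*$-product, then some superclass of $\eee$ contains elements of two distinct orders. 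Indeed, a nontrivial $*$-product is a $*$-product over $P$ or over $Q$, these being the only proper nontrivial subgroups of $G$; if it is over $P$, then the superclasses not contained in $P$ are precisely the preimages in $G$ of the nontrivial superclasses of the $G/P$-factor, each of which is a nonempty union of nontrivial $P$-cosets. Hence at least one superclass of $\eee$ contains a full coset $Px$ with $x\in Q\setminus\{1\}$, and such a coset holds the element $x$ of order $q$ together with $p-1$ elements of order $pq$. The case of a $*$-product over $Q$ is symmetric.

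Granting these two facts, part (a) is immediate. Suppose $\ddd$ comes from automorphisms and lies above a nontrivial $*$-product $\eee$, so $\eee\pleq\ddd$. Then every superclass of $\ddd$ is a union of superclasses of $\eee$, so the mixed-order superclass of $\eee$ is contained in a single superclass of $\ddd$; that superclass of $\ddd$ therefore contains elements of two distinct orders, contradicting the first fact.

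Parts (b) and (c) then follow formally, with no further computation. A nontrivial $*$-product lies above itself in the non-strict sense of the corollary's convention, so if it came from automorphisms it would contradict (a), giving (b). For (c), each configuration exhibits a supercharacter theory from automorphisms lying above a nontrivial $*$-product: if $\ddd$ comes from automorphisms and $\eee_1\pleq\ddd\pleq\eee_2$ with $\eee_1,\eee_2$ nontrivial $*$-products, then $\ddd$ lies above $\eee_1$; and if $\eee$ is a nontrivial $*$-product with $\ddd_1\pleq\eee\pleq\ddd_2$ for $\ddd_1,\ddd_2$ from automorphisms, then $\ddd_2$ lies above $\eee$. Either way (a) is violated.

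The corollary is short, and the only genuine content is the mixed-order observation for $*$-products. The main point requiring care is that \emph{every} nontrivial $*$-product has such a superclass, not merely the minimal one $\Min{P}*\Min{G/P}$ handled in Lemma \ref{lemwrapup}; this is exactly the structural remark that the non-$P$ superclasses of any $*$-product over $P$ are unions of nontrivial $P$-cosets. Once that is in place, the rest is just unwinding the refinement order, and all three statements collapse to part (a).
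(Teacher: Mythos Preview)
Your proof is correct and rests on the same idea as the paper's---superclasses of automorphism theories are order-homogeneous while nontrivial $*$-products always have a mixed-order class---but the packaging differs. The paper compresses part~(a) to a single sandwich: any $*$-product $\zzz$ over $P$ satisfies $\Min{P}*\Min{G/P}\leq\zzz$, and any $\yyy$ from automorphisms satisfies $\yyy\leq\A{G}$, so $\zzz\leq\yyy$ would force $\Min{P}*\Min{G/P}\leq\A{G}$, contradicting the incomparability already established in Lemma~\ref{lemwrapup}. Your version instead re-derives the mixed-order fact for a general $*$-product and applies it directly. The paper's route is shorter because it reduces to a single previously proved incomparability; your route is more self-contained and makes explicit why \emph{every} nontrivial $*$-product (not just the minimal one) fails to sit below an automorphism theory. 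Both deductions of (b) and (c) from (a) are identical.
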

\begin{proof}
  Let $\yyy\in\Sup(G)$ come from automorphisms
  and let $\zzz\in\Sup(G)$ be a $*$-product over, without loss of generality, $P$.
  If $\zzz \leq \yyy$, then $\Min{P}*\Min{G/P} \leq \zzz \leq \yyy \leq \A{G}$, contradicting Lemma \ref{lemwrapup};
  this proves part (\ref{noautabovestar}).
  Parts (\ref{noautisstar}) and (\ref{nostarbetween}) follow immediately.
\end{proof}

We now resolve the upper semimodularity of $\Sup(\Cyc{pq})$.

\begin{lemma}\label{lempqusm}
  Let $G$ be a cyclic group of order $pq$, where $p$ and $q$ are distinct primes. 
  Then $\Sup(G)$ is not upper semimodular.
\end{lemma}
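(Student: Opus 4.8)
The plan is to exhibit two specific elements of $\Sup(G)$ that violate the upper semimodularity condition: the atoms $\aaa = \Min{P}*\Min{G/P}$ and $\bbb = \Min{Q}*\Min{G/Q}$. By Lemma \ref{lemwrapup} both are atoms, so each covers the minimal element $\Min{G}$. First I would show that $\aaa \meet \bbb = \Min{G}$. Since both are atoms, their meet must be $\Min{G}$ unless $\aaa = \bbb$; but by Lemma \ref{lemcoprime} they are incomparable, hence distinct, so indeed $\aaa\meet\bbb = \Min{G} \cov \aaa,\bbb$. This establishes the hypothesis of the upper semimodularity criterion.

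Next I would compute the join. By Lemma \ref{lemcoprime}, applied with $\xxx=\Min{P}$, $\yyy=\Min{G/P}$, $\uuu=\Min{Q}$, $\vvv=\Min{G/Q}$, we get $\aaa\join\bbb = \Max{G}$. To derive a contradiction with upper semimodularity, it suffices to show that $\aaa$ is \emph{not} covered by $\Max{G}$, i.e. that $\aaa = \Min{P}*\Min{G/P}$ is not a coatom. This is where the argument has its only real content: I would invoke Lemma \ref{lemMMCoatoms}, which classifies the coatoms of $\Sup(G)$ as exactly those theories of the form $\Max{N}*\Max{G/N}$ for proper nontrivial $N$. Since $\aaa$ is an atom with $|\aaa| > 2$ strictly below $\Max{G}$, and a coatom would force $|\aaa|=3$ while being maximal among proper theories, the atom $\aaa$ cannot simultaneously be a coatom whenever $pq$ has proper theories strictly between $\Min{G}$ and $\Max{G}$ other than the coatoms. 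Concretely, I would note that $\aaa \leq \Max{P}*\Max{G/P}$ with the latter a coatom distinct from $\Max{G}$, and that $\aaa \neq \Max{P}*\Max{G/P}$ because $\aaa$ is an atom; hence $\aaa < \Max{P}*\Max{G/P} < \Max{G}$, so $\aaa \ncov \Max{G}$.

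Putting these together: we have $\aaa\meet\bbb \cov \aaa,\bbb$, yet $\aaa$ is not covered by $\aaa\join\bbb = \Max{G}$. This directly contradicts the defining condition of upper semimodularity, proving that $\Sup(G)$ is not upper semimodular.

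The main obstacle, and the step requiring the most care, is verifying that $\aaa$ fails to be covered by $\Max{G}$. The cleanest route is the chain $\aaa < \Max{P}*\Max{G/P} < \Max{G}$ just described: the strict inequalities follow because $\aaa$ is an atom (so it equals neither of the larger theories) and because $\Max{P}*\Max{G/P}$ is a proper coatom by Lemma \ref{lemMMCoatoms}. I would double-check that $\Max{P}*\Max{G/P} \neq \Max{G}$, which holds since the former has three superclasses and the latter only two. With that chain in hand the covering failure is immediate, and no delicate lattice-theoretic machinery beyond the already-established lemmas is needed.
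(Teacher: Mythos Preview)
Your argument is correct and matches the paper's proof exactly: both exhibit the atoms $\Min{P}*\Min{G/P}$ and $\Min{Q}*\Min{G/Q}$, use Lemma~\ref{lemcoprime} to get their join equal to $\Max{G}$, and then produce the chain $\Min{P}*\Min{G/P} < \Max{P}*\Max{G/P} < \Max{G}$ to show the covering fails. One small point: your justification that $\aaa \neq \Max{P}*\Max{G/P}$ ``because $\aaa$ is an atom'' is not airtight on its own (an atom can in principle coincide with a coatom); the paper handles this by assuming without loss of generality that $p>2$, so that $\Min{P}\neq\Max{P}$ and Lemma~\ref{lemprec} gives the strict inequality immediately.
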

\begin{proof}
  Let $P$ and $Q$ be the subgroups of orders $p$ and $q$ respectively;
  without loss of generality suppose $p>2$.
  We know by Lemma \ref{lemwrapup} that $\Min{P}*\Min{G/P}$ and $\Min{Q}*\Min{G/Q}$ are atoms 
  and therefore cover their meet. 
  Furthermore, $(\Min{P}*\Min{G/P}) \join (\Min{Q}*\Min{G/Q}) = \Max{G}$ by Lemma \ref{lemcoprime}, 
  but $\Min{P}*\Min{G/P} < \Max{P}*\Max{G/P} < \Max{G}$. 
  Therefore 
  %$\Min{P}*\Min{G/P} \not \cov \Min{P}*\Min{G/P} \join \Min{Q}*\Min{G/Q}$ and hence 
  $\Sup(G)$ is not upper semimodular.
\end{proof}

We next consider cyclic groups of order $p^2$, with $p$ an odd prime;
to do so, we need a new atom.
For every abelian group $G$,
let $\xinv{G}$ denote the supercharacter theory of $G$
that comes from $\gen{\sigma}$,
where $\sigma$ is the inversion automorphism $x\mapsto x^{-1}$.

We shall prove that $\Sup(\Cyc{p^2})$ is neither upper nor lower semimodular 
by showing that it contains the sublattice depicted in
Figure \ref{figMinInv}.
The next four lemmas will verify the coverings portrayed in the figure.
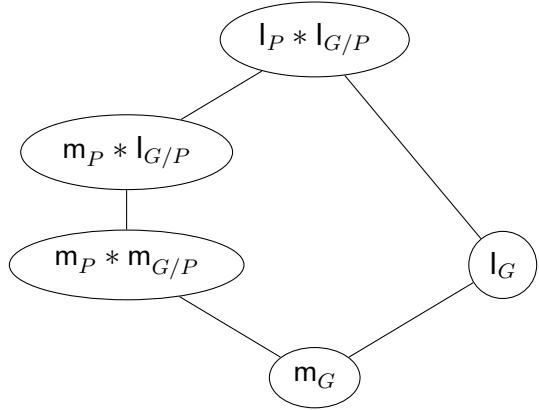
\begin{figure}[htbp]
\begin{center}
\begin{tikzpicture}[xscale=2.5, yscale=1.5]
  \tikzstyle{every node}=[ellipse,draw]
  \node (II) at ( 0,3) {$\xinv{P}*\xinv{G/P}$};
  \node (mI) at (-1,2) {$\Min{P}*\xinv{G/P}$};
  %\node (Im) at ( 0,2) {$\xinv{P}*\Min{G/P}$};
  \node (mm) at (-1,1) {$\Min{P}*\Min{G/P}$};
  \node (I)  at ( 1,1) {$\xinv{G}$};
  \node (m)  at ( 0,0) {$\Min{G}$};
  \draw (m) -- (I) -- (II);
  \draw (m) -- (mm) -- (mI) -- (II);
  %\draw (mm) -- (Im) -- (II);
\end{tikzpicture}
\caption{Hasse diagram of a sublattice of $\Sup(\Cyc{p^2})$, where $p$ is an odd prime
}
\label{figMinInv}
\end{center}
\end{figure}

\begin{lemma}\label{leminv}
 Let $G$ be a cyclic group of order greater than 2. 
 Then $\xinv{G}$ is an atom in $\Sup(G)$.
\end{lemma}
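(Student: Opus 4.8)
The plan is to prove the stronger statement that if $\zzz\in\Sup(G)$ satisfies $\Min{G}<\zzz\pleq\xinv{G}$, then $\zzz=\xinv{G}$. Since $G$ has order greater than $2$, inversion moves some element, so $\Min{G}<\xinv{G}$, and hence this suffices to show that $\xinv{G}$ is an atom. I would argue by strong induction on $|G|$, after disposing of the small cases $|G|\in\{3,4\}$ by inspection: when $|G|=3$ one has $\xinv{G}=\Max{G}$, which covers $\Min{G}$, and when $|G|=4$ the lattice $\Sup(G)$ is small enough to check directly.

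The central observation is that $\zzz\pleq\xinv{G}$ forces every superclass of $\zzz$ to be contained in a $\gen{\sigma}$-orbit $\{g,g^{-1}\}$, and hence to have size at most $2$. If $\zzz=\xinv{G}$ we are done, so I would assume $\Min{G}<\zzz<\xinv{G}$ and seek a contradiction. Then $\zzz$ is non-maximal, since $\xinv{G}\neq\Max{G}$ once $|G|\geq 4$, so Theorem \ref{thm:classification} applies and splits the argument into three cases.

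If $\zzz$ comes from automorphisms, then by Lemma \ref{Aut(G)} it corresponds to a subgroup of $\Aut(G)$ lying strictly between the trivial subgroup (corresponding to $\Min{G}$) and $\gen{\sigma}$ (corresponding to $\xinv{G}$); since $\gen{\sigma}$ has prime order $2$, no such subgroup exists, a contradiction. If $\zzz$ is a nontrivial direct product, say $G$ is the internal direct product of nontrivial subgroups $A$ and $B$ with $\zzz=\zzz_A\by\zzz_B$, I would exploit the size bound: writing out a superclass $[u]_{\zzz_A}[v]_{\zzz_B}$ and intersecting with the orbit $\{uv,(uv)^{-1}\}$, the unique factorization $G=A\by B$ shows that at most one factor can contribute a class of size $2$ and that doing so forces the other factor to have exponent $2$. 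This pins down $B\cong\Cyc{2}$ and $\zzz_B=\Min{B}$, so that $\zzz=\zzz_A\by\Min{B}$ with $\Min{A}<\zzz_A<\xinv{A}$; comparing with the identity $\xinv{G}=\xinv{A}\by\Min{B}$, the inductive hypothesis that $\xinv{A}$ is an atom of $\Sup(A)$ is contradicted. Finally, if $\zzz$ is a nontrivial $\triangle$-product over a subgroup $N$, then the superclasses outside $N$ are unions of cosets of $N$ and so have size a multiple of $|N|$; the size bound forces $|N|=2$, and then each nonidentity superclass is a single coset $\{g,gt\}$ (where $N=\{1,t\}$), which is a $\gen{\sigma}$-orbit only when $g^2=t$. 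Since the squaring map is not constant on the at least two nonidentity cosets present when $|G|\geq 5$, some such coset fails to be an orbit, contradicting $\zzz\pleq\xinv{G}$.

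The main obstacle is the direct-product and $\triangle$-product cases, where the work lies in upgrading the crude bound ``every superclass has size at most $2$'' into rigid structural information, namely that a size-$2$ class must be a genuine inversion-orbit and not merely a two-element set. The direct-product case in particular relies on the inductive hypothesis, so care is needed to set up the induction and its base cases correctly; the automorphism case, by contrast, is immediate from Lemma \ref{Aut(G)} and the primality of $|\gen{\sigma}|$.
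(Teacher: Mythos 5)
Your argument is essentially correct but follows a genuinely different route from the paper's. The paper's proof never invokes the Leung--Man classification: it works directly in the group algebra. Given $\yyy<\xinv{G}$, some inversion orbit $\{g^k,g^{-k}\}$ (with $g$ a generator and $g^{2k}\neq 1$) splits into singletons in $\yyy$, and multiplying the singleton class sum $g^k$ against the class sum of $g^{1-k}$, via Lemma \ref{superclassproducts}, isolates $g$ itself as a singleton superclass; once a generator is a singleton, all its powers are, so $\yyy=\Min{G}$. That computation is a few lines and uniform in $|G|$. Your approach instead pays for Theorem \ref{thm:classification} with three cases, an induction, and detailed structural facts about direct and $\triangle$-products; what it buys is conceptual clarity in the automorphism case (the primality of $|\gen{\sigma}|$, exactly the mechanism the paper uses later in Lemmas \ref{mpmp} and \ref{inverses}) and a reusable ``size at most $2$'' rigidity argument. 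Your base cases, the automorphism case, and the direct-product case (including the identity $\xinv{G}=\xinv{A}\by\Min{B}$ when $|B|=2$ and the application of the inductive hypothesis to $A$ of odd order at least $3$) all check out.

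One statement does need repair. For a nontrivial $\triangle$-product, built from a pair of subgroups $N\leq M$ of $G$, it is only the superclasses \emph{outside $M$} that are unions of $N$-cosets; the superclasses inside $M$ but outside $N$ need not be, so your claim that every nonidentity superclass is a single $N$-coset is false for general $\triangle$-products. The case still closes after this correction: nontriviality gives $1<N$ and $M<G$, so there are at least $|G|-|M|\geq |G|/2\geq 3$ elements outside $M$ (for even $|G|\geq 6$; odd orders admit no $N$ of order $2$, and $|G|\leq 4$ is a base case), each of whose superclasses must be an $N$-coset equal to an inversion pair, forcing every such element to square to the unique involution $t$. Since $x^2=t$ has at most two solutions in a cyclic group, this is impossible. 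You should restate the case in this form rather than asserting the coset structure for all superclasses.
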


\begin{proof}

Let $\yyy < \xinv{G}$; we wish to show that $\yyy=\Min{G}$. 
Let $g$ be a generator of $G$ and let $n=|G|$.
Then the superclasses of $\xinv{G}$ are of the form $\{g^k,g^{-k}\}$ for integers $k$;
since $\yyy<\xinv{G}$, 
there exists at least one integer $k$ such that
the superclass $\{g^k, g^{-k}\}$ of $\xinv{G}$ is broken up into singletons $\{g^k\}$ and $\{g^{-k}\}$ in $\yyy$;
note that $g^{2k} \neq 1$. 
We shall show that $\{g\}$ is a superclass of $\yyy$. 
If $g^k=g$, we are done. 
Otherwise consider the superclass of $\yyy$ containing $g^{1-k}$. 
This superclass is either $ \{g^{1-k},g^{k-1}\}$ or $\{g^{1-k}\}$. 
In the former case,
\begin{equation}\label{invatomeqn1}
  \left(g^k\right) \left(g^{1-k} + g^{k-1}\right) = g +g^{2k-1}
\end{equation}
must be a linear combination of sums of superclasses of $\yyy$.
Now $g^{2k}\neq 1$ so $g^{2k-1}\neq g^{-1}$;
since $g$ appears in (\ref{invatomeqn1}) without $g^{-1}$,
we see that $\{g\}$ must be a superclass of $\yyy$. 
If the latter case holds, 
so that $g^{1-k}$ is a singleton in $\yyy$, 
then 
$$(g^k) (g^{1-k}) = g$$
is a linear combination of sums of superclasses of $\yyy$,
so again $\{g\}$ must be a superclass of $\yyy$.

Now since the generator $g$ belongs to a singleton superclass of $\yyy$,
it follows that $g^{m}$ is a linear combination of superclass sums of $\yyy$
for all $m\in\N$. 
Hence $\yyy=\Min{G}$, so $\xinv{G}$ is an atom.
\end{proof}

\begin{lemma}\label{mpmp}
Let $G$ be a cyclic group of order $p^2$, where $p$ is an odd prime. 
Let $P$ be the subgroup of order $p$. 
Then $\Min{P}*\Min{G/P}$ is an atom in $\Sup(G)$.
\end{lemma}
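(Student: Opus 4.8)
The plan is to prove the contrapositive: if $\yyy\in\Sup(G)$ satisfies $\yyy<\Min{P}*\Min{G/P}$, then $\yyy=\Min{G}$. First I would record the structure of $\CCC:=\Min{P}*\Min{G/P}$. Fixing a generator $g$ of $G$, so that $P=\gen{g^p}$, the superclasses of $\CCC$ are the $p$ singletons $\{g^{pj}\}$ lying in $P$, together with the $p-1$ nontrivial cosets $g^aP$ (with $p\nmid a$), each of size $p$. Since $\yyy\pleq\CCC$, every $\yyy$-superclass is contained in a $\CCC$-superclass; in particular each element of $P$ is a singleton of $\yyy$, and each $\yyy$-superclass meeting a coset $g^aP$ is contained in it.

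Because $\yyy<\CCC$ strictly, some superclass of $\CCC$ is split in $\yyy$, and it cannot be a singleton, so it is a nontrivial coset. Choosing an element of that coset that is a generator (any $g^a$ with $p\nmid a$ generates $G$), I may rename $g$ so that the coset $gP$ itself is split in $\yyy$. The heart of the proof is then to show that this forces $\{g\}$ to be a singleton superclass of $\yyy$.

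For this I would exploit that $P=\gen{g^p}$ acts on $G$ by multiplication, and on the coset $gP$ this action is regular (a single orbit of size $p$). Each $h\in P$ is a singleton of $\yyy$, so for any $\yyy$-superclass $T\sseq gP$ the product $\hat{h}\,\hat{T}=\widehat{hT}$ is, by Lemma \ref{superclassproducts}, a nonnegative integer combination of superclass sums; as $hT$ is a set of $|T|$ distinct elements, comparing coefficients shows that $hT$ is a disjoint union of whole $\yyy$-superclasses. Now let $T$ be a $\yyy$-superclass in $gP$ of minimal size $m$. For every $h\in P$ the set $hT$ has size $m$ and is a union of superclasses each of size $\ge m$, hence is a single superclass, again of minimal size. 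Therefore the union $U$ of all minimal-size $\yyy$-superclasses in $gP$ is $P$-invariant; since $P$ acts transitively on $gP$ and $U$ is nonempty, $U=gP$. Thus every $\yyy$-superclass in $gP$ has size $m$, so $m$ divides $p$; as the coset is split we have $m<p$, whence $m=1$ and in particular $\{g\}$ is a singleton of $\yyy$.

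Finally, exactly as at the end of Lemma \ref{leminv}, once $\{g\}$ is a singleton I would argue that $g^k$ is a superclass sum of $\yyy$ for every $k$ (using that products of superclass sums are combinations of superclass sums), and since $g$ generates $G$ this means every element of $G$ is a singleton, i.e.\ $\yyy=\Min{G}$. I expect the main obstacle to be precisely the gap noted above: multiplying by $g^p$ only guarantees that the image of a superclass is a \emph{union} of superclasses, not a single one. The minimal-size argument together with the transitivity of the $P$-action on $gP$ is what closes this gap.
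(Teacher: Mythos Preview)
Your proof is correct, and it takes a genuinely different route from the paper's argument. The paper observes that $\Min{P}*\Min{G/P}$ itself comes from automorphisms---namely from the order-$p$ subgroup $K\leq\Aut(G)$ generated by $x\mapsto x^{p+1}$---and then invokes the Leung--Man classification (Theorem~\ref{thm:classification}) to conclude that any $\zzz<\Min{P}*\Min{G/P}$ must also come from automorphisms; the lattice isomorphism of Lemma~\ref{Aut(G)} then forces the corresponding subgroup $H$ to be proper in $K$, hence trivial. Your argument, by contrast, is entirely elementary: you work directly with superclass sums via Lemma~\ref{superclassproducts}, and your key device---using the regular action of $P$ on a split coset together with a minimal-size argument to force all superclasses there to be singletons---avoids both the classification theorem and the identification of $\Min{P}*\Min{G/P}$ as an automorphic theory. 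The paper's proof is shorter and highlights the structural fact that this $*$-product comes from a prime-order automorphism group; your proof is self-contained and would survive in a setting where the classification is unavailable.
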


\begin{proof}
Suppose $\zzz<\Min{P}*\Min{G/P}$ for some $\zzz\in\Sup(G)$. 
Since no $*$-product lies below $\Min{P}*\Min{G/P}$,
by Theorem \ref{thm:classification}
we know that $\zzz$ comes from some group of automorphisms $H\leq\Aut(G)$.
However, 
%$$\Min{P}*\Min{G/P}=\{1/g^p/g^{2p}/.../g^{p^2-1}/Pg/Pg^2/.../Pg^{p-1}\}$$
$\Min{P}*\Min{G/P}$
also comes from a group of automorphisms $K$ of order $p$,
generated by the map $x\mapsto x^{p+1}$.
Then $H$ must be a proper subgroup of $K$ by Lemma \ref{Aut(G)},
so $H=1$.
Therefore $\zzz=\Min{G}$, so $\Min{P}*\Min{G/P}$ is an atom.
\end{proof}

\begin{lemma}\label{leminvminmin}
Let $G$ be a cyclic group of order $p^2$, where $p$ is an odd prime. 
Let $P$ be the subgroup of order $p$. 
Then $\left(\Min{P} * \Min{G/P}\right) \join \xinv{G} = \xinv{P}*\xinv{G/P}$.
\end{lemma}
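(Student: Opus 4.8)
The plan is to prove the two lattice inequalities $(\Min{P}*\Min{G/P})\join\xinv{G}\pleq\xinv{P}*\xinv{G/P}$ and $\xinv{P}*\xinv{G/P}\pleq(\Min{P}*\Min{G/P})\join\xinv{G}$ separately, then conclude equality by antisymmetry. First I would fix a generator $g$ of $G$, identify each element with its exponent in $\Zint/p^2\Zint$, and write $P=\gen{g^p}$, labelling the $p$ cosets of $P$ by the residue of the exponent modulo $p$. With this notation the three relevant partitions are transparent: $\Min{P}*\Min{G/P}$ has as superclasses the $p$ singletons inside $P$ together with each of the $p-1$ nontrivial cosets taken whole; $\xinv{G}$ has superclasses $\{g^i,g^{-i}\}$, of which only $\{1\}$ is a singleton because $p$ is odd; and $\xinv{P}*\xinv{G/P}$ has superclasses $\{1\}$, the pairs $\{g^{kp},g^{-kp}\}$ inside $P$, and the fused coset pairs $Pg^j\cup Pg^{-j}$ outside $P$.

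For the first inequality I would check that $\xinv{P}*\xinv{G/P}$ is an upper bound of both generators of the join. That $\Min{P}*\Min{G/P}\pleq\xinv{P}*\xinv{G/P}$ is immediate from Lemma \ref{lemprec}, since $\Min{P}$ and $\Min{G/P}$ are minimal in $\Sup(P)$ and $\Sup(G/P)$ respectively. For $\xinv{G}\pleq\xinv{P}*\xinv{G/P}$ I would show that each superclass $\{g^i,g^{-i}\}$ of $\xinv{G}$ lies inside a single superclass of $\xinv{P}*\xinv{G/P}$: when $g^i\in P$ this pair is exactly an $\xinv{P}$-superclass, and when $g^i\notin P$ the elements $g^i$ and $g^{-i}$ lie in the opposite cosets $Pg^j$ and $Pg^{-j}$, which are fused in $\xinv{P}*\xinv{G/P}$. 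Being the least upper bound, the join then lies below $\xinv{P}*\xinv{G/P}$.

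For the reverse inequality I would recall that the join coincides with the set-partition join, so two elements share a superclass of the join exactly when they are connected by a chain whose consecutive members share a superclass of $\Min{P}*\Min{G/P}$ or of $\xinv{G}$; it therefore suffices to show that each superclass of $\xinv{P}*\xinv{G/P}$ is connected. The singleton $\{1\}$ is trivial; each pair $\{g^{kp},g^{-kp}\}$ inside $P$ is already a single superclass of $\xinv{G}$; and for a fused coset pair $Pg^j\cup Pg^{-j}$, each coset is connected within itself by $\Min{P}*\Min{G/P}$, while the $\xinv{G}$-edge $g^j\sim g^{-j}$ bridges the two cosets, since $-j\equiv p-j\pmod p$ puts $g^{-j}$ in the opposite coset. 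Hence $\xinv{P}*\xinv{G/P}\pleq(\Min{P}*\Min{G/P})\join\xinv{G}$, and combining the two inequalities gives equality.

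The step I expect to demand the most care is the coset bookkeeping that underlies both $\xinv{G}\pleq\xinv{P}*\xinv{G/P}$ and the bridging in the reverse inequality: both rest on the observation that inversion carries the coset $Pg^j$ to the \emph{distinct} coset $Pg^{-j}$, which uses the oddness of $p$ (so that $j\not\equiv-j\pmod p$ for $j\not\equiv0$, and similarly $g^{kp}\ne g^{-kp}$). Once this is pinned down, the remaining verifications are routine partition arithmetic.
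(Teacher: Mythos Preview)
Your proposal is correct and follows essentially the same approach as the paper: both arguments identify the superclasses of the three partitions explicitly and then compute the join as the set-partition join (which, as the paper notes in Section~\ref{sect_background}, coincides with the join in $\Sup(G)$). The paper's proof simply asserts the outcome of that computation---that the superclass of $x$ in the join is $\{x,x^{-1}\}$ if $x\in P$ and $Px\cup Px^{-1}$ otherwise---whereas you package the same computation as two inequalities, invoking Lemma~\ref{lemprec} for one half of the upper bound and spelling out the connectivity argument for the lower bound; the substance is the same.
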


\begin{proof}
Let $g$ be a generator of $G$.
We begin by noting that the superclasses of $\Min{P}*\Min{G/P}$ are
$$\{\{x\}: x\in P\} \cup \{Px: x\in G-P\}$$
%$$\{\{g^{kp}\} : k\in\mathbb{Z}\} \cup \{Pg^\ell: \ell\in\{1,\ldots,p-1\}\}$$
while the superclasses of $\xinv{G}$ are 
$\{\{x,x^{-1}\}:x\in G\}$.
%$\{\{g^k,g^{-k}\}:k\in\mathbb{Z}\}$.
Let $\yyy=\left(\Min{P}*\Min{G/P}\right) \join \xinv{G}$,
and let $x\in G$.
If $x\in P$, then the superclass of $x$ in $\yyy$ is simply $\{x,x^{-1}\}$.
If $x\not\in P$, then the superclass of $x$ in $\yyy$ is $Px \cup Px^{-1}$.
Therefore
$\yyy=\xinv{P}*\xinv{G/P}$, as desired.
\end{proof}

%THIS LEMMA IS NEEDED TO COMPLETE THE HASSE DIAGRAM,
%SO WE CAN DISPROVE LSM.
\begin{lemma}\label{inverses}
Let $G$ be a cyclic group of order $p^2$, where $p$ is an odd prime. 
Let $P$ be the subgroup of order $p$. 
Then $\xinv{G}$ is covered by $\xinv{P}*\xinv{G/P}$.\marginpar{\tiny\raggedright Needed to prove not LSM}
\end{lemma}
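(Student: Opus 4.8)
The plan is to rule out any supercharacter theory lying strictly between $\xinv{G}$ and $\xinv{P}*\xinv{G/P}$. First I would record that $\xinv{G}<\xinv{P}*\xinv{G/P}$: the inequality follows from Lemma \ref{leminvminmin}, since $\xinv{G}\pleq(\Min{P}*\Min{G/P})\join\xinv{G}=\xinv{P}*\xinv{G/P}$, and it is strict because, writing $G=\gen{g}$, the superclass of $g$ in $\xinv{P}*\xinv{G/P}$ is the $2p$-element set $Pg\cup Pg^{-1}$, whereas in $\xinv{G}$ it is only $\{g,g^{-1}\}$. The key observation is that \emph{both} endpoints come from automorphisms. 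Identifying $\Aut(G)$ with $(\Zint/p^2\Zint)^\times$, which is cyclic of order $p(p-1)$, a short orbit computation shows $\xinv{P}*\xinv{G/P}=\alpha(H)$ for the unique subgroup of order $2p$, namely $H=\{a:a\equiv\pm1\pmod p\}$: the $H$-orbit of an order-$p^2$ element $g^m$ is $\{g^j:j\equiv\pm m\pmod p\}=Pg^m\cup Pg^{-m}$, and the orbit of $g^{pk}\in P$ is $\{g^{pk},g^{-pk}\}$, matching the superclasses listed in Lemma \ref{leminvminmin}. Likewise $\xinv{G}=\alpha(\gen{\sigma})$ with $\gen{\sigma}$ of order $2$, and $\gen{\sigma}\pleq H$ (as $-1\equiv-1\pmod p$). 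Since $H$ is cyclic of order $2p$ and $|H:\gen{\sigma}|=p$ is prime, $\gen{\sigma}$ is maximal in $H$, so $\gen{\sigma}\cov H$ in the subgroup lattice of $\Aut(G)$.

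Next I would suppose, for contradiction, that some $\zzz\in\Sup(G)$ satisfies $\xinv{G}<\zzz<\xinv{P}*\xinv{G/P}$. As $\zzz$ lies strictly below $\xinv{P}*\xinv{G/P}\pleq\Max{G}$, it is non-maximal, so Theorem \ref{thm:classification} (in the $p^2$ case) forces $\zzz$ either to come from automorphisms or to be a $*$-product over $P$, the only proper nontrivial subgroup. In the automorphism case, writing $\zzz=\alpha(K)$, the lattice isomorphism of Lemma \ref{Aut(G)} gives $\gen{\sigma}<K<H$, contradicting $\gen{\sigma}\cov H$.

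In the $*$-product case, write $\zzz=\eee*\fff$ with $\eee\in\Sup(P)$ and $\fff\in\Sup(G/P)$. Then $\eee*\fff\pleq\xinv{P}*\xinv{G/P}$ together with Lemma \ref{lemprec} gives $\eee\pleq\xinv{P}$ and $\fff\pleq\xinv{G/P}$; since $\Sup(P)$ and $\Sup(G/P)$ are the subgroup lattices of their (cyclic, order $p-1$) automorphism groups, only $\eee\in\{\Min{P},\xinv{P}\}$ and $\fff\in\{\Min{G/P},\xinv{G/P}\}$ remain. I would then show $\xinv{G}\not\pleq\eee*\fff$ for every choice but the top one. If $\fff=\Min{G/P}$, then for $x\notin P$ the superclass of $x$ is the single coset $Px$, which omits $x^{-1}\in Px^{-1}\neq Px$; and if $\eee=\Min{P}$ with $\fff=\xinv{G/P}$, then for $x\in P$ with $x\neq1$ the superclass of $x$ is the singleton $\{x\}$, which omits $x^{-1}\neq x$. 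In each case $\{x,x^{-1}\}$ fails to lie in one superclass, so $\xinv{G}\not\pleq\eee*\fff$. The sole surviving candidate is $\xinv{P}*\xinv{G/P}$ itself, excluded by strictness; this contradiction establishes $\xinv{G}\cov\xinv{P}*\xinv{G/P}$.

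I expect the $*$-product case to be the main obstacle. The subtle point is that $\Min{P}*\xinv{G/P}$ really does lie below $\xinv{P}*\xinv{G/P}$—it is an edge of the sublattice in Figure \ref{figMinInv}—so the covering can hold only because that theory fails to lie \emph{above} $\xinv{G}$. Pinning down this ``diagonal'' behavior, namely that among the theories dominated by $\xinv{P}*\xinv{G/P}$ only the top one dominates $\xinv{G}$, is precisely where the explicit superclass bookkeeping is essential.
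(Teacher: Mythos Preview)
Your proof is correct and follows the same overall strategy as the paper: invoke the classification theorem to split into the automorphism case and the $*$-product case, and dispatch the automorphism case via Lagrange's theorem exactly as the paper does. The only difference is in the $*$-product case: you enumerate the four candidates $\eee*\fff$ with $\eee\leq\xinv{P}$ and $\fff\leq\xinv{G/P}$ and check by hand that none but the top dominates $\xinv{G}$, whereas the paper observes more directly that any $*$-product $\xxx$ over $P$ satisfies $\Min{P}*\Min{G/P}\leq\xxx$, hence $\xinv{P}*\xinv{G/P}=(\Min{P}*\Min{G/P})\join\xinv{G}\leq\xxx$ by Lemma~\ref{leminvminmin}, contradicting $\xxx<\xinv{P}*\xinv{G/P}$. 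Your version works fine and makes the ``diagonal'' behavior you highlight explicit; the paper's join argument is a bit slicker and avoids the case check.
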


\begin{proof}
Certainly $\xinv{G} < \xinv{P}*\xinv{G/P}$.
Now suppose for a contradiction that $\xxx\in\Sup(G)$ such that $\xinv{G} < \xxx < \xinv{P}*\xinv{G/P}$.
By Theorem \ref{thm:classification}, 
   either $\xxx$ is a nontrivial $*$-product or 
   $\xxx$ comes from automorphisms (or both).
If $\xxx$ is a $*$-product, then $\Min{P}*\Min{G/P} \pleq \xxx$,
so $(\Min{P}*\Min{G/P})\join\xinv{G} \leq \xxx$, contradicting Lemma \ref{leminvminmin}.

Thus $\xxx$ comes from automorphisms.
Now $\xinv{P}*\xinv{G/P}$ also comes from automorphisms,
namely from a subgroup $B\leq \Aut(G)$ of order $2p$,
while $\xinv{G}$ comes from a subgroup $A\leq\Aut(G)$ of order $2$.
Then the lattice isomorphism of Lemma \ref{Aut(G)} requires $\xxx$ to come from a subgroup $C\leq\Aut(G)$
with $A<C<B$, contradicting Lagrange's theorem.
\end{proof}

We are now ready to resolve the semimodularity question for $\Sup(\Cyc{p^2})$.

\begin{lemma}\label{p^2}
Let $G$ be a cyclic group of order $p^2$, where $p$ is an odd prime. 
Then $\Sup(G)$ is neither upper semimodular nor lower semimodular.
\end{lemma}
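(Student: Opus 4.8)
The strategy is to exhibit the six-element sublattice drawn in Figure~\ref{figMinInv} and to show that its top and bottom portions each violate one of the two semimodularity conditions. All the covering relations in that diagram have been verified by Lemmas~\ref{leminv}, \ref{mpmp}, \ref{leminvminmin}, and \ref{inverses}, so the work here is simply to read off the appropriate meet and join and compare covering heights. Write $\aaa=\Min{P}*\Min{G/P}$, $\iota=\xinv{G}$, and $\tau=\xinv{P}*\xinv{G/P}$ for brevity.

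\emph{Failure of upper semimodularity.} First I would consider the two atoms $\aaa$ and $\iota$. By Lemmas~\ref{mpmp} and \ref{leminv} both cover the minimal element $\Min{G}$, which is their meet, so the hypothesis of upper semimodularity is met. To contradict its conclusion I must show that $\aaa$ is \emph{not} covered by $\aaa\join\iota$. By Lemma~\ref{leminvminmin} we have $\aaa\join\iota=\tau$; but the chain
$$\aaa = \Min{P}*\Min{G/P} \;<\; \Min{P}*\xinv{G/P} \;<\; \xinv{P}*\xinv{G/P} = \tau$$
shows a strictly intermediate element, so $\aaa$ is not covered by $\aaa\join\iota$. (The two strict inequalities are immediate from Lemma~\ref{lemprec}, since $\Min{G/P}<\xinv{G/P}$ and $\Min{P}<\xinv{P}$, using that $p$ is odd so inversion is nontrivial on each factor.) Hence the upper semimodularity condition fails for the pair $\aaa,\iota$.

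\emph{Failure of lower semimodularity.} For this I would use the pair $\Min{P}*\xinv{G/P}$ and $\iota$, sitting just below $\tau$. Lemma~\ref{inverses} gives $\iota\cov\tau$, and the left edge of the diagram gives $\Min{P}*\xinv{G/P}\cov\tau$; thus both are covered by their join $\tau$, satisfying the hypothesis of lower semimodularity. Its conclusion would force both to cover their meet. But the meet of these two elements is $\Min{G}$: indeed $\iota$ is an atom, so any lower bound other than $\Min{G}$ would have to equal $\iota$, which is impossible since $\iota\not\leq\Min{P}*\xinv{G/P}$ (the latter is a nontrivial $*$-product while $\iota$ comes from automorphisms, so they are incomparable by the comparability facts already established). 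Then the chain $\Min{G}<\aaa<\Min{P}*\xinv{G/P}$ shows that $\Min{P}*\xinv{G/P}$ does \emph{not} cover the meet, so lower semimodularity fails as well.

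The only genuinely delicate point is confirming that the meet in the second part is truly $\Min{G}$ rather than some other element, and that the two elements on the left edge are incomparable with $\iota$; both reduce to the observation that $\iota$ comes from automorphisms while the $*$-products on the left do not lie below any such theory below $\tau$. I expect this incomparability bookkeeping — not any hard computation — to be the main thing to state carefully, since everything else is a direct application of the four preceding covering lemmas together with Lemma~\ref{lemprec}.
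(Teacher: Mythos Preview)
Your approach is essentially identical to the paper's: the same five-element sublattice (not six), the same pair $\aaa,\iota$ for the upper failure, and the same pair $\Min{P}*\xinv{G/P},\iota$ for the lower failure, with the same chain witnesses. The paper likewise cites Lemmas~\ref{leminv}--\ref{inverses} for the coverings and notes that the two left-edge coverings follow from Lemma~\ref{lemcover} (via Lemma~\ref{lemClosure}) together with Lemma~\ref{leminv}.

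One point to correct: your reason for $\iota\not\leq\Min{P}*\xinv{G/P}$ is wrong. There is no established incomparability principle between $*$-products and automorphism theories in $\Cyc{p^2}$; indeed $\Min{P}*\Min{G/P}$ itself comes from automorphisms (see the proof of Lemma~\ref{mpmp}), so such a principle is false here. The correct argument is immediate: for nontrivial $x\in P$ the superclass $[x]_{\iota}=\{x,x^{-1}\}$ has two elements (as $p$ is odd), while $[x]_{\Min{P}*\xinv{G/P}}=\{x\}$, so $\iota\not\leq\Min{P}*\xinv{G/P}$. With that fix your proof goes through verbatim.
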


\begin{proof}
Lemmas \ref{leminv} through \ref{inverses}
have verified all but two of the coverings in the Hasse diagram of Figure \ref{figMinInv}.
That $\Min{P}*\Min{G/P}\cov \Min{P}*\xinv{G/P}$
and that $\Min{P}*\xinv{G/P}\cov \xinv{P}*\xinv{G/P}$
follow from Lemmas \ref{leminv} and \ref{lemClosure}.
Then
$\Min{P}*\Min{G/P}$ and $\xinv{G}$ cover their meet but are not both covered by their join,
so $\Sup(G)$ is not upper semimodular.
Likewise
$\Min{P}*\xinv{G/P}$ and $\xinv{G}$ are covered by their join
but do not both cover their meet, so $\Sup(G)$ is not lower semimodular.
\end{proof}

Corollary \ref{inheritance} now allows us to use
Lemmas \ref{lempqusm} and \ref{p^2} 
to prove a necessary and sufficient condition for upper semimodularity
of the supercharacter theory lattices of cyclic groups.

\begin{theorem}\label{upper semimodular}
Let $G$ be a cyclic group. Then $\Sup(G)$ is upper semimodular if and only if the order of $G$ is prime or four.
\end{theorem}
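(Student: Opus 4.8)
The plan is to prove the two implications separately, in each case reducing to a handful of base orders via Corollary~\ref{inheritance}; throughout I take $G$ nontrivial and set $n=|G|$.

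For sufficiency I would split into the orders the theorem permits. If $n$ is prime, then $\Sup(G)$ is modular by Corollary~\ref{Cp_modular} and hence upper semimodular. If $n=4$, I would show directly that $\Sup(\Cyc4)$ is a three-element chain. Since $\Cyc4$ has order $p^{2}$ with $p=2$, the order-$p^{2}$ special case of Theorem~\ref{thm:classification} shows that every supercharacter theory is $\Max{G}$, comes from automorphisms, or is a nontrivial $*$-product over the unique proper nontrivial subgroup $P$ (of order $2$). Since $\Aut(\Cyc4)$ has order $2$, the automorphism theories are exactly $\Min{G}$ and $\A{G}$ by Lemma~\ref{Aut(G)}, while $\Sup(P)$ and $\Sup(G/P)$ are singletons so the only $*$-product is $\Max{P}*\Max{G/P}=\A{G}$. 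Thus $\Sup(\Cyc4)=\{\Min{G},\A{G},\Max{G}\}$ is a chain, which is modular and so upper semimodular.

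For necessity, suppose $n$ is neither prime nor four; then $n$ is composite, and by Corollary~\ref{inheritance} it suffices to find a divisor $d\mid n$ with $\Sup(\Cyc{d})$ not upper semimodular. If $n$ has two distinct prime divisors $p,q$, take $d=pq$ and apply Lemma~\ref{lempqusm}. Otherwise $n=p^{a}$ with $a\ge2$; if $p$ is odd, take $d=p^{2}$ and apply Lemma~\ref{p^2}. This leaves only $n=2^{a}$ with $a\ge3$ (the value $a=2$ being the excluded order $4$), where $8\mid n$, so it is enough to prove that $\Sup(\Cyc8)$ is not upper semimodular.

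I expect $\Cyc8$ to be the crux, as none of the earlier lemmas apply to it. Writing $\Cyc8=\gen{g}$ and $N=\gen{g^{4}}$, I would compare the theory $\uuu$ coming from $\gen{x\mapsto x^{5}}\le\Aut(\Cyc8)$, whose superclasses are $\{1\},\{g,g^{5}\},\{g^{2}\},\{g^{3},g^{7}\},\{g^{4}\},\{g^{6}\}$, with $\xinv{G}$. Both are atoms: $\xinv{G}$ by Lemma~\ref{leminv}, and $\uuu$ because its two nonsingleton classes $\{g,g^{5}\}$ and $\{g^{3},g^{7}\}$ are interchanged by inversion, so (superclass partitions being closed under inversion) any theory $\zzz$ with $\Min{G}<\zzz<\uuu$ would have to keep one of these pairs merged while splitting the other, which is impossible. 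Being distinct atoms, $\uuu$ and $\xinv{G}$ cover their meet $\Min{G}$. Their join is the partition join $\uuu\join\xinv{G}=\A{G}$, with superclasses $\{1\},\{g^{4}\},\{g^{2},g^{6}\},\{g,g^{3},g^{5},g^{7}\}$; but $\A{G}$ fails to cover $\uuu$, since the $*$-product $\Min{N}*\Min{G/N}$, with superclasses $\{1\},\{g^{4}\},\{g,g^{5}\},\{g^{2},g^{6}\},\{g^{3},g^{7}\}$, satisfies $\uuu<\Min{N}*\Min{G/N}<\A{G}$. Thus $\uuu$ and $\xinv{G}$ cover their meet but are not both covered by their join, so $\Sup(\Cyc8)$ is not upper semimodular. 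The delicate points to check are precisely that $\uuu$ is genuinely an atom (for which inversion-closure of superclasses is needed, a property not subsumed by the product criterion of Lemma~\ref{superclassproducts}) and the two strict containments that wedge $\Min{N}*\Min{G/N}$ between $\uuu$ and $\A{G}$.
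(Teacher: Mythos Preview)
Your proposal is correct and follows essentially the same route as the paper: the same case split into prime, $4$, $pq$, odd $p^2$, and multiples of $8$, invoking Corollary~\ref{inheritance} together with Lemmas~\ref{lempqusm} and~\ref{p^2}. Where the paper simply writes ``we compute'' for $\Cyc_4$ and $\Cyc_8$, you supply explicit arguments; these are sound, and the inversion-closure of superclasses you use to show $\uuu$ is an atom is indeed a standard fact (it is part of the Schur-ring axioms, and Diaconis--Isaacs prove directly that $K^{-1}$ is a superclass whenever $K$ is).
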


\begin{proof}
If $|G|$ is prime then $\Sup(G)$ is upper semimodular by Corollary \ref{Cp_modular}.
If $|G|=4$, we compute that $\Sup(G)$ is a three-element chain and so is upper semimodular.

Now suppose the order of $G$ equals $n$ where $n$ is neither prime nor four. 
Then $n$ is either a multiple of two distinct primes, 
a multiple of a square of an odd prime, 
or a multiple of 8.
In each case we may apply Corollary \ref{inheritance}.
If $n$ is a multiple of $pq$, then $\Sup(G)$ is not upper semimodular by Lemma \ref{lempqusm}. 
If $n$ is a multiple of $p^2$ where $p$ is odd, then Lemma \ref{p^2} implies that $\Sup(G)$ is not upper semimodular. 
Finally, we can compute that $\Sup(\Cyc{8})$ is not upper semimodular,
so if $n$ is a multiple of 8, then $\Sup(G)$ is not upper semimodular.
\end{proof}

\section{Lower Semimodularity}\label{sect_lower}

In order to obtain necessary and sufficient conditions for lower semimodularity,
we must look at two specific cases:  cyclic groups of orders $pq$ and $pqr$.

\begin{theorem}\label{lowersemimodularity}
  Let $G$ be a cyclic group of order $pq$, where $p$ and $q$ are distinct primes. 
  Then $\Sup(G)$ is lower semimodular.
\end{theorem}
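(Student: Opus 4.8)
The plan is to verify the lower semimodularity condition directly, splitting into cases according to the trichotomy furnished by Theorem~\ref{thm:classification}. The key structural observation is that $\Sup(G)$ is assembled from three modular sublattices. By Theorem~\ref{thm:classification} together with Corollary~\ref{corwrapup}, every supercharacter theory other than $\Max{G}$ either comes from automorphisms or is a nontrivial $*$-product over exactly one of $P$ and $Q$, and these classes are disjoint. The theories coming from automorphisms are precisely the elements $\pleq\A{G}$, since nothing below $\A{G}$ can be a nontrivial $*$-product by Corollary~\ref{corwrapup}(\ref{noautabovestar}); call this principal ideal $L_A$. By Lemma~\ref{Aut(G)} it is isomorphic to the subgroup lattice of the abelian group $\Aut(G)$, hence modular exactly as in Corollary~\ref{Cp_modular}, and being an ideal it is convex, so meets, joins, and coverings within it agree with those of $\Sup(G)$. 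Similarly the $*$-products over $P$ form a convex sublattice $L_P\cong\Sup(P)\otimes\Sup(G/P)$ by Lemma~\ref{lemClosure}, which is modular because $\Sup(P)$ and $\Sup(G/P)$ are isomorphic to subgroup lattices of abelian groups; likewise for $L_Q$.

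Writing $\ddd=\aaa\join\bbb$ for a pair with $\aaa,\bbb\cov\ddd$, I would first dispose of the three cases in which $\aaa$ and $\bbb$ have matching type. If both come from automorphisms, then so does $\ddd$ (it is neither a nontrivial $*$-product, by Corollary~\ref{corwrapup}(\ref{noautabovestar}), nor $\Max{G}$, whose lower covers are $*$-products by Lemma~\ref{lemMMCoatoms}), so the whole configuration lies inside the modular lattice $L_A$ and the conclusion is immediate. If $\aaa$ and $\bbb$ are $*$-products over the same subgroup, the configuration lies inside $L_P$ or $L_Q$, and again we are done. If $\aaa$ is a $*$-product over $P$ and $\bbb$ over $Q$, then $\ddd=\Max{G}$ by Lemma~\ref{lemcoprime}, so $\aaa$ and $\bbb$ are the two coatoms $\Max{P}*\Max{G/P}$ and $\Max{Q}*\Max{G/Q}$; their meet is $\A{G}$ by Corollary~\ref{coatommeets}, and $\A{G}=\Max{P}\by\Max{Q}\cov\Max{P}*\Max{G/P}$ and symmetrically by Lemma~\ref{lem:dpunique}, so the meet is covered by both.

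The remaining case, in which one theory comes from automorphisms and the other is a nontrivial $*$-product, is where the real work lies, since here the meet is genuinely a meet in $\Sup(G)$ rather than a meet of partitions. Suppose $\aaa$ comes from automorphisms and $\bbb$ is a $*$-product over $P$. Then $\ddd$ is neither from automorphisms (Corollary~\ref{corwrapup}(\ref{noautabovestar})) nor $\Max{G}$ (as $\aaa$ is not a coatom), so $\ddd\in L_P$; writing $\ddd=\eee*\dot{\fff}$, Lemma~\ref{lem:dpunique} forces $\aaa=\eee\by\fff$, the unique theory from automorphisms covered by $\ddd$, while $\bbb$ is covered by $\ddd$ inside $L_P$, so by Lemma~\ref{lemcover} we may take $\bbb=\eee'*\dot{\fff}$ with $\eee'\cov\eee$ (the case in which the $\Sup(G/P)$-coordinate drops is symmetric). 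I would then identify $\aaa\meet\bbb$ as the direct product $\eee'\by\fff$: since $\aaa\pleq\A{G}$ the meet lies below $\A{G}$ and hence comes from automorphisms, and combining the restriction bounds of Lemma~\ref{lem:dprestrict} (giving $(\aaa\meet\bbb)_P\pleq\eee'$ and $(\aaa\meet\bbb)_Q\pleq\fff$) with the direct-product inequality of Lemma~\ref{lem:dpinequality} should pin it down exactly. Finally Lemma~\ref{lem:dpcovering} gives $\eee'\by\fff\cov\eee\by\fff=\aaa$ and Lemma~\ref{lem:dpunique} gives $\eee'\by\fff\cov\eee'*\dot{\fff}=\bbb$, so the meet is covered by both. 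The main obstacle is precisely this last identification: it is the only covering in the argument that is not inherited from one of the modular sublattices, and it hinges on recognizing that the meet of a theory from automorphisms with a $*$-product collapses to a direct product.
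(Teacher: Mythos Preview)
Your proposal is correct and follows essentially the same route as the paper's proof: both case on the Leung--Man trichotomy, dispose of the ``pure'' cases using the modularity of the convex sublattices $L_A$, $L_P$, $L_Q$, and handle the mixed automorphism/$*$-product case by writing the join as $\eee*\dot{\fff}$, identifying the automorphism-type element as $\eee\by\fff$ via Lemma~\ref{lem:dpunique}, and exhibiting the meet as the smaller direct product covered by both via Lemmas~\ref{lem:dpunique} and~\ref{lem:dpcovering}. The only cosmetic differences are that the paper organizes the case split around the type of the join rather than the types of $\aaa,\bbb$, and in the mixed case the paper shows $\eee'\by\fff$ is the meet by the pure covering argument ($\www\pleq\xxx\meet\yyy<\xxx$ and $\www\cov\xxx$ force equality) rather than via your restriction bounds from Lemmas~\ref{lem:dprestrict} and~\ref{lem:dpinequality}; both are valid.
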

\begin{proof}
  Let $P$ and $Q$ be the subgroups of orders $p$ and $q$ respectively. 
  Suppose $\xxx,\yyy\in\Sup(G)$ are covered by their join,
  and let $\zzz=\xxx \join \yyy$. 
  We shall show that $\xxx$ and $\yyy$ cover $\xxx\meet\yyy$.
  
  Since $G$ is cyclic of order $pq$, 
  by Theorem \ref{thm:classification} 
  there are only three possibilities for $\zzz$: 
  it could be the maximal supercharacter theory, a supercharacter theory coming from automorphisms, or a $*$-product.

  First, suppose $\zzz=\Max{G}$. 
  Then $\xxx$ and $\yyy$ are distinct coatoms,
  namely $\Max{P}*\Max{G/P}$ and $\Max{Q}*\Max{G/Q}$ by Lemma \ref{lemMMCoatoms},
  and hence by Lemma \ref{coatommeets}, 
  $\xxx \meet \yyy$ is the supercharacter theory that comes from $\Aut(G)$,
  whose superclasses partition $G$'s elements by their orders.
  Since there are four divisors of $pq$, 
  we have $|\xxx \meet \yyy|=4$
  while $|\xxx|=|\yyy|=3$.
  Therefore $\xxx$ and $\yyy$ cover $\xxx \meet \yyy$, as desired.

  Now suppose $\zzz$ comes from automorphisms. 
  Then $\xxx$ and $\yyy$ cannot be $*$-products by Corollary \ref{corwrapup},
  so they both must come from automorphisms. 
  Consider the subset $L$ of $\Sup(G)$ consisting of all supercharacter theories coming from automorphisms;
  by Corollary \ref{corwrapup}, it must be convex,
  and it contains its least upper bound $\A{G}$ and greatest lower bound $\Min{G}$;
  thus it is a convex sublattice.
  %no $*$-products can be between two supercharacter theories that come from automorphisms in $\Sup(G)$. 
  %Furthermore, the maximal supercharacter cannot be between two supercharacter theories that come from automorphisms. 
  %Therefore, covering relations in the sublattice of supercharacter theories from automorphisms remain the same as in the full lattice, $\Sup(G)$. 
  By Lemma \ref{Aut(G)}, $L$ is isomorphic to the subgroup lattice of $\Aut(G)$.
  Since the subgroup lattice of $\Aut(G)$ is modular because $\Aut(G)$ is abelian,
  it follows that $L$ is modular and hence lower semimodular.
  Thus $\xxx$ and $\yyy$ cover $\xxx \meet \yyy$ in $L$,
  and since $L$ is convex in $\Sup(G)$,
  they must cover their meet in $\Sup(G)$ as well.

  It remains to consider the case that $\zzz$ is a $*$-product; 
  without loss of generality, take it to be a $*$-product over $P$. 
  If $\xxx$ and $\yyy$ both came from automorphisms, 
  then so would $\zzz=\xxx \join \yyy\leq\A{G}$ by the convexity of the sublattice $L$, 
  contradicting Corollary \ref{corwrapup}. \marginpar{\tiny\raggedright Uses Cor.~\ref{corwrapup}}
  Thus without loss of generality, $\yyy$ is a $*$-product; since $\yyy<\zzz$, 
  it must be a $*$-product over $P$ rather than over $Q$ by Lemma \ref{lemcoprime}.
  There are now two subcases to consider.
    First, if $\xxx$ is also a $*$-product,
    then it is a $*$-product over $P$ by the same reason as before.
    Now the sublattice of $*$-products over $P$ is convex by Lemma \ref{lemClosure}, 
    %Therefore, covering relations in the $*$-product sublattice remain the same in the full lattice, $\Sup(G)$. 
    and it is lower semimodular by Lemma \ref{lemusmlsm} 
    because $\Sup(P)$ and $\Sup(G/P)$ are lower semimodular
    by Corollary \ref{Cp_modular}. 
    \marginpar{\tiny\raggedright Uses Lemma \ref{lemusmlsm}}
    Therefore $\xxx$ and $\yyy$ cover $\xxx \meet \yyy$.
    
    The remaining possibility is that $\xxx$ comes from automorphisms. 
    Since $\yyy$ and $\zzz$ are two $*$-products over $P$ with $\yyy\cov\zzz$,
    by Lemma \ref{lemcover} we can write $\yyy=\rrr_1*\dot{\sss_1}$ and $\zzz=\rrr_2*\dot{\sss_2}$ 
    for some $\rrr_1,\rrr_2\in\Sup(P)$ and $\sss_1,\sss_2\in\Sup(Q)$ such that 
    either $\rrr_1 \cov \rrr_2$ and $\sss_1=\sss_2$, or else $\rrr_1=\rrr_2$ and $\sss_1\cov\sss_2$. 
    Then by Lemma \ref{lem:dpunique},
    we have that $\xxx=\rrr_2\by\sss_2$. 
    Let $\www=\rrr_1\by\sss_1$;
    then by Lemma \ref{lem:dpunique},\marginpar{\tiny\raggedright Uses Lemma \ref{lem:dpunique}}
    we know that $\www \cov \rrr_1*\dot{\sss_1}$.
    Moreover,
    by Lemma \ref{lem:dpcovering}, we also know that $\www \cov \xxx$. \marginpar{\tiny\raggedright Uses Lemma \ref{lem:dpcovering}}
    Thus, $\www=\xxx\meet\yyy$ and so both $\xxx$ and $\yyy$ cover $\xxx \meet \yyy$,
    as shown in Figure \ref{figwxyz}.
    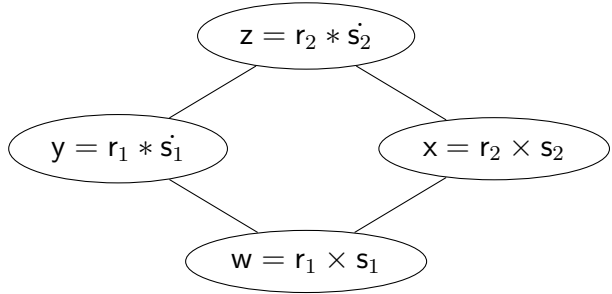
\begin{figure}[htbp]
    \begin{center}
    \begin{tikzpicture}[xscale=2.5, yscale=1.5]
      \tikzstyle{every node}=[ellipse,draw]
      \node (z) at ( 0,1) {$\zzz = \rrr_2*\dot{\sss_2}$};
      \node (y) at (-1,0) {$\yyy = \rrr_1*\dot{\sss_1}$};
      \node (x) at (1,0) {$\xxx=\rrr_2\by\sss_2$};
      \node (w)  at (0,-1) {$\www=\rrr_1\by\sss_1$};
      \draw (w) -- (x) -- (z);
      \draw (w) -- (y) -- (z);
    \end{tikzpicture}
    \caption{Hasse diagram of a sublattice of $\Sup(\Cyc{pq})$}
    \label{figwxyz}
    \end{center}
    \end{figure}
  
  Since $\xxx$ and $\yyy$ cover their meet in all possible cases for $\zzz$, 
  we conclude that $\Sup(G)$ is lower semimodular.
\end{proof}

The preceding theorem showed that $\Sup(\Cyc{pq})$ is lower semimodular.
If a cyclic group's order has a third prime factor, however,
then its supercharacter theory lattice is not lower semimodular.

\begin{lemma}\label{pqr}
  Let $G$ be a cyclic group of order $pqr$,
  where $p$, $q$, and $r$ are primes and $p\neq q$. 
  Then $\Sup(G)$ is not lower semimodular.
\end{lemma}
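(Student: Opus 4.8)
The plan is to exhibit an explicit pair of supercharacter theories that are covered by their join but fail to both cover their meet. Let $P$ and $Q$ be the subgroups of $G$ of orders $p$ and $q$; these are distinct since $p\neq q$, and $PQ$ is the subgroup of order $pq$, which is proper because $|G|=pqr>pq$. Write $P^{*}=P\setminus\{1\}$ and $Q^{*}=Q\setminus\{1\}$, and let $W$ denote the set of elements of $PQ$ of order exactly $pq$. I would take
\[
  \xxx=\Max{P}*\Max{G/P}\quad\text{and}\quad \yyy=\Max{Q}*\Max{G/Q}.
\]
By Lemma \ref{lemMMCoatoms} these are two distinct coatoms of $\Sup(G)$, so their join is $\Max{G}$ and both are covered by it. It therefore remains to show that $\xxx$ and $\yyy$ do \emph{not} both cover their meet; I will show that $\xxx$ does not.

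First I would identify the meet exactly, claiming that $\xxx\meet\yyy=\A{PQ}*\Max{G/PQ}$. The right-hand side refines both coatoms, so it is a common lower bound. For the reverse, let $\uuu$ be any supercharacter theory with $\uuu\pleq\xxx$ and $\uuu\pleq\yyy$. Then $P^{*}$ and $Q^{*}$ are each unions of superclasses of $\uuu$ (being superclasses of $\xxx$ and of $\yyy$ respectively), so $\hat{P^{*}}$ and $\hat{Q^{*}}$ are sums of superclass sums of $\uuu$. The key point is that, because $P\cap Q=\{1\}$, the product
\[
  \hat{P^{*}}\,\hat{Q^{*}}=\sum_{w\in W} w ,
\]
so by Lemma \ref{superclassproducts} the sum on the right is a nonnegative integer combination of superclass sums of $\uuu$; hence $W$ is a union of superclasses of $\uuu$. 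Thus no superclass of $\uuu$ straddles $W$ and $G\setminus PQ$, and combined with $\uuu\pleq\xxx,\yyy$ this forces every superclass of $\uuu$ to lie inside one of $\{1\}$, $P^{*}$, $Q^{*}$, $W$, or $G\setminus PQ$; that is, $\uuu\pleq\A{PQ}*\Max{G/PQ}$. Since every common lower bound lies below $\A{PQ}*\Max{G/PQ}$, which is itself a common lower bound, it is the meet.

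Finally I would exhibit a supercharacter theory strictly between the meet and $\xxx$. The natural candidate is the $*$-product over $P$ given by $\vvv=\Max{P}*(\Max{PQ/P}*\Max{(G/P)/(PQ/P)})$, whose superclasses are $\{1\}$, $P^{*}$, $Q^{*}\cup W$, and $G\setminus PQ$. Because $W\neq\emptyset$ and $G\setminus PQ\neq\emptyset$, one checks directly that
\[
  \A{PQ}*\Max{G/PQ}<\vvv<\xxx ,
\]
so $\xxx$ does not cover $\xxx\meet\yyy$, while $\xxx$ and $\yyy$ are covered by $\xxx\join\yyy=\Max{G}$. Hence $\Sup(G)$ is not lower semimodular.

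I expect the main obstacle to be the meet computation. Because covers in $\Sup(G)$ need not be covers in the partition lattice, it is not enough to observe that the naive partition-meet $\{\{1\},P^{*},Q^{*},G\setminus(P\cup Q)\}$ has the wrong number of blocks; one must genuinely pin down $\xxx\meet\yyy$. The crucial leverage is the superclass-product identity $\hat{P^{*}}\,\hat{Q^{*}}=\hat{W}$, which forces $W$ apart from the rest of $G\setminus(P\cup Q)$: the presence of the third prime factor $r$ makes $\hat{W}$ a \emph{proper} subsum of $\hat{G\setminus(P\cup Q)}$, so the meet drops strictly below $\vvv$. This is exactly what fails when $|G|=pq$, where $G\setminus PQ$ is empty and the analogous $\vvv$ collapses onto $\xxx$, consistent with $\Sup(\Cyc{pq})$ being lower semimodular.
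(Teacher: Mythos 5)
Your proposal is correct and follows essentially the same route as the paper: the same two coatoms $\Max{P}*\Max{G/P}$ and $\Max{Q}*\Max{G/Q}$, the same identification of their meet as $\A{PQ}*\Max{G/PQ}$ via the product $\hat{P^{*}}\hat{Q^{*}}=\hat{W}$ together with Lemma \ref{superclassproducts}, and the same intermediate $*$-product (the paper writes it as $(\Max{P}*\Max{N/P})*\Max{G/N}$ with $N=PQ$, which is the identical partition to your $\vvv$). The only cosmetic difference is that you bound an arbitrary common lower bound directly, whereas the paper rules out the one coarser candidate partition; both hinge on the same superclass-product computation.
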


\begin{proof}
Let $P$, $Q$, and $N$ be the subgroups of order $p$, $q$, and $pq$ respectively. 
Then $\Max{P}*\Max{G/P}$ and $\Max{Q}*\Max{G/Q}$ are coatoms by Lemma \ref{lemMMCoatoms}, and hence are covered by their join.
We claim that 
the meet of $\Max{P}*\Max{G/P}$ and $\Max{Q}*\Max{G/Q}$ is $\A{N}*\Max{G/N}$.
Indeed, consider the superclasses of those three supercharacter theories.
  \begin{eqnarray*}
  \begin{array}{ll}
  \mbox{supercharacter} \\
  \mbox{theory}            & \mbox{superclasses} \\ \hline
  \Max{P}*\Max{G/P}        & \big\{\{1\},~\{\mbox{elements of order $p$}\},~\{\mbox{all other elements}\}\big\} \\
  \Max{Q}*\Max{G/Q}        & \big\{\{1\},~\{\mbox{elements of order $q$}\},~\{\mbox{all other elements}\}\big\} \\
  \A{N}*\Max{G/N}          & \big\{\{1\},~\{\mbox{elements of order $p$}\},~\{\mbox{elements of order $q$}\}, \\
                           &      \qquad \{\mbox{elements of order $pq$}\},~\{\mbox{all other elements}\}\big\}
  \end{array}
  \end{eqnarray*}
  Clearly $\A{N}*\Max{G/N}$ is a lower bound for $\Max{P}*\Max{G/P}$ and $\Max{Q}*\Max{G/Q}$.
  If it is not itself the meet of $(\Max{P}*\Max{G/P})$ and $(\Max{Q}*\Max{G/Q})$,
  then that meet would have to have superclasses
  % Then $\AAA*\Max{G/N} \pleq \Y$ since $\AAA*\Max{G/N}$ is a lower bound. Also, $\Y \prec \Max{P}*\Max{G/P},\Max{Q}*\Max{G/Q}$ since $\Max{P}*\Max{G/P}$ and $\Max{Q}*\Max{G/Q}$ are incomparable. Suppose that $\AAA*\Max{G/N} \prec \Y$. Then $\Y$ must have four parts as $\AAA*\Max{G/N}$ has five parts, and $\Max{P}*\Max{G/P}$ and  $\Max{Q}*\Max{G/Q}$ both have three parts. The only way to form $\Y$ such that the conditions above are satisfied is to join the part containing elements of order $pq$ with the part containing $N$ cosets in $\AAA*\Max{G/N}$.
  \begin{equation}\label{eqn:nontheory}
    \big\{\{1\},~\{\mbox{elements of order $p$}\},~\{\mbox{elements of order $q$}\},~\{\mbox{all other elements}\}\big\}.
  \end{equation}
  Now multiplying the sum of the elements of order $p$ with 
                  the sum of the elements of order $q$ 
  would yield a sum of elements of order $pq$,
  which is not a linear combination of sums of parts of (\ref{eqn:nontheory});
  hence (\ref{eqn:nontheory}) does not correspond to a supercharacter theory of $G$.
  Therefore $(\Max{P}*\Max{G/P}) \meet (\Max{Q}*\Max{G/Q})=\A{N}*\Max{G/N}$,
  as claimed.
This is not covered by $\Max{P}*\Max{G/P}$, however, since
$$ %(\Max{P}*\Max{G/P}) \meet (\Max{Q}*\Max{G/Q})
   %= 
   \A{N}*\Max{G/N}
   < (\Max{P}*\Max{N/P})*\Max{G/N}
   < \Max{P}*\Max{G/P},$$
so we conclude that $\Sup(G)$ is not lower semimodular.
%We will show that $\Max{P}*\Max{G/P} \meet \Max{Q}*\Max{G/Q} \ncov \Max{P}*\Max{G/P}$.
%
%We know that $\Max{P}*\Max{G/P} \meet \Max{Q}*\Max{G/Q}=\AAA*\Max{G/N}$ where $\AAA$ is the supercharacter theory resulting from $\Aut(N)$ by Lemma \ref{maxmeets}. Let $\Y$ be the partition between $\AAA*\Max{G/N}$ and $\Max{P}*\Max{G/P}$ as defined below.
%
%Note:
%$$\Max{P}*\Max{G/P}=\{1~/~g^{qr},g^{2qr},...,g^{(p-1)qr}~/~Pg \cup Pg^2 \cup ... \cup Pg^{qr-1}\}$$
%$$\Y=\{1~/~g^{qr},...,g^{(p-1)qr}~/~g^{pr},g^{2pr},...,g^{(q-1)pr}~g^r,...,g^{(pq-1)r}~/~Ng \cup ... \cup Ng^{r-1}\}$$
%$$\AAA*\Max{G/N}=\{1~/~g^{qr},g^{2qr}...,g^{(p-1)qr}~/~g^{pr},g^{2pr},...,g^{(q-1)pr}~/~g^r,g^{2r}...,g^{(pq-1)r}~/~Ng \cup Ng^2 \cup ... \cup Ng^{r-1}\}$$
%
%We know that $\Y$ is a supercharacter theory as $\Y=(\Max{P}*\Max{N/P})*\Max{G/P}$. 
%Therefore $\AAA*\Max{G/N} \ncov \Max{P}*\Max{G/P}$, 
%so $\Max{P}*\Max{G/P} \meet \Max{Q}*\Max{G/Q} \ncov \Max{P}*\Max{G/P}$ and hence, $\Sup(G)$ is not lower semimodular.
\end{proof}

\begin{theorem}\label{LSM}
Let $G$ be a cyclic group. Then $\Sup(G)$ is lower semimodular if and only if the order of $G$ is prime, the product of two distinct primes, or four.
\end{theorem}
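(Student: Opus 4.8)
The plan is to prove both directions by reducing to cases already settled, exactly in the spirit of the proof of Theorem \ref{upper semimodular}. For sufficiency, if $|G|$ is prime then $\Sup(G)$ is modular, and hence lower semimodular, by Corollary \ref{Cp_modular}; if $|G|$ is a product of two distinct primes then $\Sup(G)$ is lower semimodular by Theorem \ref{lowersemimodularity}; and if $|G|=4$ then, as observed in the proof of Theorem \ref{upper semimodular}, $\Sup(G)$ is a three-element chain and so is modular. This disposes of the ``if'' direction at once.

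For necessity I would argue the contrapositive: assuming $n=|G|$ is neither prime, nor a product of two distinct primes, nor four, I exhibit a divisor of $n$ whose supercharacter theory lattice is already known to fail lower semimodularity, and then invoke Corollary \ref{inheritance} to conclude that $\Sup(\Cyc{n})$ fails as well. The organizing device is a trichotomy on the factorization of $n$: (I) $n$ has at least two distinct prime factors but is not of the form $pq$; (II) $n$ is a power of an odd prime with exponent at least two; (III) $n$ is a power of two with exponent at least three. A routine check shows these three cases exhaust the ``bad'' orders.

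In case (I), $n$ is divisible by a product of three primes $pqr$ with $p\neq q$: if $n$ has three distinct prime factors this is immediate, while if $n$ has exactly two distinct prime factors $p,q$ and is not squarefree, then $p^2q$ or $pq^2$ divides $n$, which is such a product with a repeated prime. Lemma \ref{pqr} then shows $\Sup(\Cyc{pqr})$ is not lower semimodular. In case (II), $p^2$ divides $n$ for an odd prime $p$, and $\Sup(\Cyc{p^2})$ is not lower semimodular by Lemma \ref{p^2}. In both of these cases Corollary \ref{inheritance} completes the argument. Care is needed in case (I) only to note that Lemma \ref{pqr} permits $r=p$, so that the non-squarefree two-prime situation genuinely falls under it.

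The one case not reached by the earlier lemmas is (III): since Lemma \ref{p^2} requires an odd prime and Lemma \ref{pqr} requires two distinct prime factors, neither applies when $n=2^a$. Here $8\mid n$, so it suffices to treat the single group $\Cyc{8}$; as with the corresponding step for upper semimodularity, a direct computation of the finite lattice $\Sup(\Cyc{8})$ shows that it is not lower semimodular, and Corollary \ref{inheritance} then handles every $n$ divisible by $8$. I expect this explicit verification of $\Sup(\Cyc{8})$ to be the only genuine obstacle, since the remainder of the proof is merely the assembly of previously established results.
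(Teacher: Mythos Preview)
Your proposal is correct and follows essentially the same approach as the paper: both directions are handled by invoking Corollary~\ref{Cp_modular}, Theorem~\ref{lowersemimodularity}, Lemma~\ref{p^2}, Lemma~\ref{pqr}, the direct computation for $\Cyc{8}$, and Corollary~\ref{inheritance}, with only a cosmetic difference in how the case analysis on $n$ is organized (you split by ``at least two prime factors but not $pq$ / odd prime power / power of two,'' whereas the paper splits by the number of distinct prime divisors). Your explicit observation that Lemma~\ref{pqr} permits $r=p$, so that the non-squarefree two-prime case is covered, is exactly the point the paper uses when it writes $p^2q\mid |G|$.
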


\begin{proof}
  If $|G|$ is prime, 
    then $\Sup(G)$ is lower semimodular by Corollary \ref{Cp_modular}. 
  If $|G|$ is the product of two distinct primes, 
    then $\Sup(G)$ is lower semimodular by Lemma \ref{lowersemimodularity}.
  If $|G|=4$, we compute that $\Sup(G)$ is lower semimodular. 

  Suppose the order of $G$ is neither prime, nor the product of two distinct primes, nor four,
  and consider the number of distinct prime factors of $|G|$. 
%  In each case we apply Corollary \ref{inheritance}.
  First suppose $|G|=p^a$. 
    If $p$ is even, then $8$ divides $|G|$; we calculate that $\Sup(\Cyc{8})$ is not lower semimodular,
    so by Corollary \ref{inheritance}, neither is $\Sup(G)$.
    If $p$ is odd, then $p^2$ divides $|G|$ and $\Sup(G)$ is not lower semimodular by Lemma \ref{p^2}. 
  Next suppose $|G|=p^a q^b$. 
    Without loss of generality we assume $a\geq 2$;
    then $p^2 q$ divides $|G|$ and $\Sup(G)$ is not lower semimodular by Lemma \ref{pqr}.
  Finally, if at least three distinct primes divide $|G|$, say $p$, $q$, and $r$, 
    then $pqr$ divides $|G|$, and $\Sup(G)$ is not lower semimodular by Lemma \ref{pqr}.
\end{proof}

%%%%%%%%
% bib
%%%%%%%%

%%  \begin{thebibliography}{9}
%%
%%
%%  \bibitem{jca} J. Abbott, \emph{Sets Lattices and Boolean Algebras}, Boston, Allyn and Bacon (1969).  
%%  %just a book about sets, lattices, and stuff.  we might use for definitions.
%%  %
%%  \bibitem{diaconis_isaacs} P. Diaconis, I. M. Isaacs, ``Supercharacters and Superclasses for Algebra Groups", \emph{Trans. Amer. Math. Soc.} 360 (2008) 2359--2392.
%%  %
%%  \bibitem{ah1} Anders O. F. Hendrickson, ``Construction of Supercharacter Theories of Finite Groups". %what else goes here?
%%  \bibitem{ssr} Anders O. F. Hendrickson, ``Supercharacter theory constructions corresponding to Schur ring products." %what else goes here?
%%
%%  \bibitem{leungman} Leung, K. H., Man, S. H. (1996). On Schur rings over cyclic groups, II. \emph{J. Algebra} 183:273--285.
%%  \bibitem{donnellan} Thomas Donnellan, \emph{Lattice Theory}, London, Pergamon Press Ltd. (1968).
%%
%%
%%  \end{thebibliography}

\bibliography{semimodularity_bib}{}

\begin{thebibliography}{10}
\providecommand{\url}[1]{\texttt{#1}}
\providecommand{\urlprefix}{URL }
\providecommand{\eprint}[2][]{\url{arXiv:#2}}

\bibitem{aimpaper}
Aguiar, M., Andre, C., Benedetti, C., Bergeron, N., Chen, Z., Diaconis, P.,
  Hendrickson, A., Hsiao, S., Isaacs, I.~M., Jedwab, A., Johnson, K., Karaali,
  G., Lauve, A., Le, T., Lewis, S., Li, H., Magaard, K., Marberg, E., Novelli,
  J.-C., Pang, A., Saliola, F., Tevlin, L., Thibon, J.-Y., Thiem, N.,
  Venkateswaran, V., Vinroot, C.~R., Yan, N., Zabrocki, M. (2012).
\newblock Supercharacters, symmetric functions in noncommuting variables, and
  related {Hopf} algebras.
\newblock \emph{Advances in Mathematics} To appear, \eprint{1009.4134v1
  [math.CO]}.

\bibitem{arias2004}
Arias-Castro, E., Diaconis, P., Stanley, R. (2004).
\newblock A super-class walk on upper-triangular matrices.
\newblock \emph{J. Algebra} 278:739--765.

\bibitem{birkhoff}
Birkhoff, G. (1967).
\newblock \emph{Lattice Theory}, \emph{AMS Colloquium Publications}, volume~25.
\newblock Providence: American Mathematical Society, 3rd edition.

\bibitem{diaconis_isaacs}
Diaconis, P., Isaacs, I.~M. (2008).
\newblock Supercharacters and superclasses for algebra groups.
\newblock \emph{Trans.~Amer.~Math.~Soc.} 360:2359--2392.

\bibitem{donnellan}
Donnellan, T. (1968).
\newblock \emph{Lattice Theory}.
\newblock London: Pergamon Press.

\bibitem{fowler_garcia_karaali}
Fowler, C.~F., Garcia, S.~R., Karaali, G. (2012).
\newblock Ramanujan sums as supercharacters (preliminary version).
\newblock Preprint, \eprint{1201.1060v1 [math.NT]}.

\bibitem{ssr}
Hendrickson, A. O.~F. (2012).
\newblock Supercharacter constructions corresponding to {Schur} ring products.
\newblock \emph{Comm.~Alg.} ,~to appear.

\bibitem{leung_man1996}
Leung, K.~H., Man, S.~H. (1996).
\newblock On {S}chur rings over cyclic groups, {II}.
\newblock \emph{J. Algebra} 183:273--285.

\bibitem{muzychuk_ponomarenko}
Muzychuk, M., Ponomarenko, I. (2009).
\newblock Schur rings.
\newblock \emph{European J. Combin.} 30:1526--1539.

\bibitem{szasz}
Szasz, G. (1969).
\newblock \emph{Introduction to Lattice Theory}.
\newblock New York: Academic Press, 3rd edition.

\end{thebibliography}
\bibliographystyle{commalg}

\end{document}